\documentclass[a4paper]{scrartcl}
\usepackage[margin=1in]{geometry}

\usepackage[utf8]{inputenc}
\usepackage[T1]{fontenc}
\usepackage{lmodern}
\usepackage{microtype}

\usepackage[dvipsnames]{xcolor}
\usepackage{subcaption}
\usepackage{booktabs}
\usepackage{tikz}

\usepackage{amsmath}
\usepackage{amssymb}
\usepackage{amsthm}
\newtheorem{theorem}{Theorem}
\newtheorem{lemma}[theorem]{Lemma}
\newtheorem{corollary}[theorem]{Corollary}
\theoremstyle{definition}
\newtheorem{remark}[theorem]{Remark}
\newtheorem{example}[theorem]{Example}

\usepackage{enumitem}
\setlist{noitemsep,topsep=0pt}

\usepackage[backend=biber,style=alphabetic,maxnames=10]{biblatex}
\usepackage{breakurl}
\usepackage[colorlinks]{hyperref}
\hypersetup{
  linkcolor=BrickRed,
  citecolor=Green,
  urlcolor=NavyBlue,
  menucolor=BrickRed
}

\newcommand{\N}{\mathbb{N}}

\newcommand{\R}{\mathbb{R}}
\newcommand{\Rge}{\R_{\ge 0}}
\newcommand{\Rle}{\R_{\le 0}}
\newcommand{\bigO}{\mathcal{O}}
\newcommand{\set}[1]{\{ #1 \}}
\newcommand{\fromto}[2]{\{ #1, \dots, #2 \}}

\DeclareMathOperator{\st}{\,s.t.}
\DeclareMathOperator*{\argmin}{argmin}
\DeclareMathOperator{\ratio}{ratio}

\usepackage{authblk}

\usepackage{orcidlink}

\begin{document}

\title{The Complexity Landscape of Two-Stage Robust Selection Problems with Budgeted Uncertainty}

\author[1]{Marc Goerigk\textsuperscript{\orcidlink{0000-0002-2516-0129}}}
\author[1]{Dorothee Henke\textsuperscript{\orcidlink{0000-0001-9190-642X}}}
\author[2]{Lasse Wulf\textsuperscript{\orcidlink{0000-0001-7139-4092}}}

\affil[1]{Business Decisions and Data Science, University of Passau, Germany. \texttt{\{marc.goerigk,dorothee.henke\}@uni-passau.de}}
\affil[2]{Theoretical Computer Science, IT University of Copenhagen, Denmark. \texttt{lasw@itu.dk}}

\date{}

\maketitle

\begin{abstract}
  A standard type of uncertainty set in robust optimization is budgeted uncertainty, where an interval of possible values for each parameter is given and the total deviation from their lower bounds is bounded. In the two-stage setting, discrete and continuous budgeted uncertainty have to be distinguished. The complexity of such problems is largely unexplored, in particular if the underlying nominal optimization problem is simple, such as for selection problems. In this paper, we give a comprehensive answer to long-standing open complexity questions for three types of selection problems and three types of budgeted uncertainty sets. In particular, we demonstrate that the two-stage selection problem with continuous budgeted uncertainty is NP-hard, while the corresponding two-stage representative selection problem is solvable in polynomial time. Our hardness result implies that also the two-stage assignment problem with continuous budgeted uncertainty is NP-hard.

  \paragraph{Keywords} robust optimization, two-stage robustness, budgeted uncertainty, selection problem, assignment problem, computational complexity

  \paragraph{Mathematics Subject Classification}
  90C17, % Robustness in Mathematical Programming
  90C27, % Combinatorial Optimization
  68Q25 % Analysis of algorithms and problem complexity
\end{abstract}

\section{Introduction}

In combinatorial optimization, we often study problems of the type
\begin{equation}
\min_{x\in X} \sum_{i=1}^n c_i x_i, \tag{Nom}\label{eq:nom}
\end{equation}
where $X\subseteq\{0,1\}^n$ denotes the discrete set of feasible solutions, and $c\in\Rge^n$ is a cost vector that represents the costs of each item that we can choose. In practice, problem data is often uncertain, which has led to the popular framework of robust combinatorial optimization, where the uncertainty in the cost vector $c$ is included by a worst-case perspective over a set of possible cost scenarios. The focus of this paper will be on two-stage robustness. Before we discuss this framework, we introduce the more fundamental setting of single-stage robustness.
The robust counterpart of \eqref{eq:nom} then is to solve
\begin{equation}
\min_{x\in X} \max_{c\in U} \sum_{i=1}^n c_i x_i \tag{MinMax} \label{eq:minmax}
\end{equation}
for an uncertainty set $U\subseteq\Rge^n$ that contains all scenarios against which we wish to protect the solution $x$. For a general overview on robust combinatorial optimization, we refer to the recent book \cite{robook}.

Unfortunately, robust problems \eqref{eq:minmax} may become harder to solve than their nominal counterparts \eqref{eq:nom}, even for simple uncertainty sets. Most nominal problems that can be solved in polynomial time become NP-hard in their robust version, even for the case that there are two possible cost scenarios, i.e.,  $U=\{c^1,c^2\}$; see \cite{kasperski2016robust}. As a consequence, there has been intense research on specific special cases of $X$ and $U$ that allow better tractability and establish the boundary cases of complexity.

Regarding the set $X$, variants of so-called selection problems have been studied. In the most general case, we define the nominal \emph{multi-representative selection problem} as follows. Let a set of $n$ items and a partition $T_1\cup T_2 \cup \dots \cup T_m$ of this set be given, along with integers $p_j$ for each $j\in[m]=\{1,\dots,m\}$ such that $p_j\in\{1,\dots,\lvert T_j \rvert\}$. For a cost vector $c\in\Rge^n$, the task is to select $p_j$ items from each set $T_j$ such that the total costs are minimized, i.e., to solve $\min_{x\in X} \sum_{i=1}^n c_ix_i$ with
\[ X = \{ x\in\{0,1\}^n : \sum_{i\in T_j} x_i = p_j \ \forall j\in[m]\}. \]
Two special cases of this problem are the \emph{representative selection problem}, where $p_j=1$ for all $j\in[m]$, and the \emph{selection problem}, where $m=1$. Robust variants of the representative selection problem have been studied, e.g., in \cite{busing2011recoverable} and \cite{dolgui2012min}, while the selection problem has been studied, e.g., in \cite{doerr2013improved} and \cite{lachmann2021linear}. The analysis of types of selection problems in robust optimization plays a central role, as they represent one of the easiest nontrivial combinatorial problems that are possible.

Regarding the set $U$, a seminal breakthrough has been achieved with the introduction of budgeted uncertainty sets in \cite{bertsimas2003robust,bertsimas2004price}, which allow some degree of modeling flexibility, while the complexity class of nominal combinatorial optimization problems is preserved for their robust counterpart. 
Such sets are defined by an interval $[\underline{c}_i,\underline{c}_i+d_i]$ for each item with uncertain costs, and an additional bound~$\Gamma$ on the total cost deviation. 
Given $\underline c_i$ and $d_i$, we define $\overline c_i := \underline c_i + d_i$.
In the current literature, three variants of this concept have been established. The \emph{continuous budgeted uncertainty set} is defined as
\[ U^C = \{ c\in\Rge^n : \exists \delta\in[0,1]^n \text{ s.t. } c_i = \underline{c}_i + d_i\delta_i\ \forall i\in[n],\ \sum_{i=1}^n \delta_i \le \Gamma\}, \]
while the \emph{discrete budgeted uncertainty set} is given by
\[ U^D = \{ c\in\Rge^n : \exists \delta\in\{0,1\}^n \text{ s.t. } c_i = \underline{c}_i + d_i\delta_i\ \forall i\in[n],\ \sum_{i=1}^n \delta_i \le \Gamma\}. \]
Observe that, in the discrete case, item costs are either at their respective lower bound or at their upper bound (in which case we also say that an item is attacked). In the continuous case, it is possible to distribute the attack budget~$\Gamma$ continuously over items. If $\Gamma$ is an integer, then min-max problems with discrete or continuous uncertainty sets result in the same worst-case scenario and thus both problems are equivalent. The third variant of budgeted uncertainty that has been studied in the literature, which we call \emph{alternative continuous budgeted uncertainty}, is defined as
\[ U^{AC} = \{ c\in\Rge^n : \exists \delta\in \prod_{i=1}^n [0,d_i] \text{ s.t. } c_i = \underline{c}_i + \delta_i\ \forall i\in[n],\ \sum_{i=1}^n \delta_i \le \Gamma\}. \]
In this setting, $\delta$ does not represent the (relative) degree by which costs are increased from $\underline{c}$ to $\overline{c}$, but rather the (absolute) amount of cost increase itself. Accordingly, we can assume $\Gamma\le n$ for the first two types of uncertainty, but $\Gamma \le \sum_{i=1}^n d_i$ for the last type of set.
In Figure~\ref{fig:ex}, we visualize the differences between these three sets. Figure~\ref{ex1a} shows a continuous budgeted set with $\underline{c}=(2,1)$, $d = (2,3)$, and $\Gamma=1.5$. The discrete budgeted set with the same parameters is presented in Figure~\ref{ex1b}. Finally, we show the case with $\Gamma=4$ for the alternative continuous budgeted set in Figure~\ref{ex1c}, where the sum of additional costs is bounded rather than the sum of relative deviations.

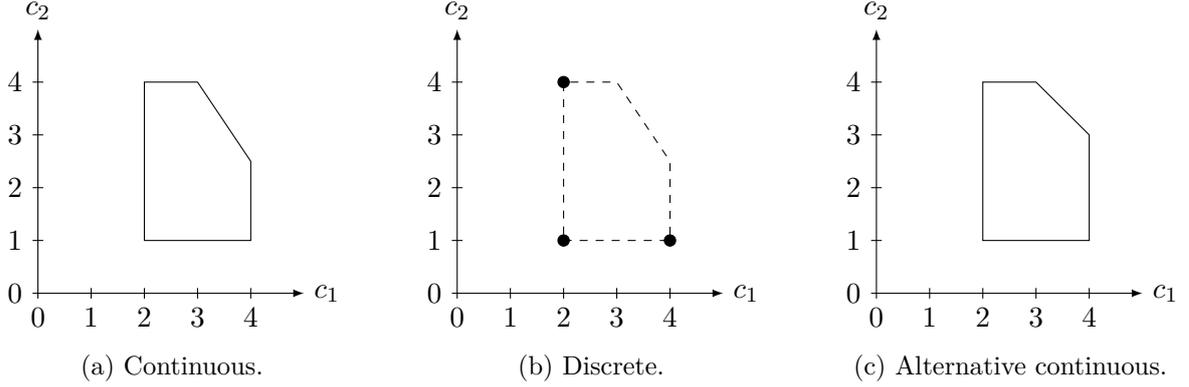
\begin{figure}
  \begin{tabular}{ccc}
    \begin{subfigure}[t]{0.32\textwidth}
      \centering
      \begin{tikzpicture}[scale=0.7]
        \draw[-latex] (-0.1,0) node[left] {$0$} -- (5,0) node[right] {$c_1$};
        \draw[-latex] (0,-0.1) node[below] {$0$} -- (0,5) node[above] {$c_2$};

        \draw (1,-0.1) node[below] {$1$} -- (1,0.1);
        \draw (2,-0.1) node[below] {$2$} -- (2,0.1);
        \draw (3,-0.1) node[below] {$3$} -- (3,0.1);
        \draw (4,-0.1) node[below] {$4$} -- (4,0.1);

        \draw (-0.1,1) node[left] {$1$} -- (0.1,1);
        \draw (-0.1,2) node[left] {$2$} -- (0.1,2);
        \draw (-0.1,3) node[left] {$3$} -- (0.1,3);
        \draw (-0.1,4) node[left] {$4$} -- (0.1,4);

        \draw (2,1) -- (4,1) -- (4,2.5) -- (3,4) -- (2,4) -- cycle;
      \end{tikzpicture}
      \subcaption{Continuous.}
      \label{ex1a}
    \end{subfigure}
    &
    \begin{subfigure}[t]{0.32\textwidth}
      \centering
      \begin{tikzpicture}[scale=0.7]
        \draw[-latex] (-0.1,0) node[left] {$0$} -- (5,0) node[right] {$c_1$};
        \draw[-latex] (0,-0.1) node[below] {$0$} -- (0,5) node[above] {$c_2$};

        \draw (1,-0.1) node[below] {$1$} -- (1,0.1);
        \draw (2,-0.1) node[below] {$2$} -- (2,0.1);
        \draw (3,-0.1) node[below] {$3$} -- (3,0.1);
        \draw (4,-0.1) node[below] {$4$} -- (4,0.1);

        \draw (-0.1,1) node[left] {$1$} -- (0.1,1);
        \draw (-0.1,2) node[left] {$2$} -- (0.1,2);
        \draw (-0.1,3) node[left] {$3$} -- (0.1,3);
        \draw (-0.1,4) node[left] {$4$} -- (0.1,4);

        \draw[dashed] (2,1) -- (4,1) -- (4,2.5) -- (3,4) -- (2,4) -- cycle;
        \node[draw,circle,fill=black,inner sep=1.5pt] at (2,1) {};
        \node[draw,circle,fill=black,inner sep=1.5pt] at (4,1) {};
        \node[draw,circle,fill=black,inner sep=1.5pt] at (2,4) {};
      \end{tikzpicture}
      \subcaption{Discrete.}
      \label{ex1b}
    \end{subfigure}
    &
    \begin{subfigure}[t]{0.32\textwidth}
      \centering
      \begin{tikzpicture}[scale=0.7]
        \draw[-latex] (-0.1,0) node[left] {$0$} -- (5,0) node[right] {$c_1$};
        \draw[-latex] (0,-0.1) node[below] {$0$} -- (0,5) node[above] {$c_2$};

        \draw (1,-0.1) node[below] {$1$} -- (1,0.1);
        \draw (2,-0.1) node[below] {$2$} -- (2,0.1);
        \draw (3,-0.1) node[below] {$3$} -- (3,0.1);
        \draw (4,-0.1) node[below] {$4$} -- (4,0.1);

        \draw (-0.1,1) node[left] {$1$} -- (0.1,1);
        \draw (-0.1,2) node[left] {$2$} -- (0.1,2);
        \draw (-0.1,3) node[left] {$3$} -- (0.1,3);
        \draw (-0.1,4) node[left] {$4$} -- (0.1,4);

        \draw (2,1) -- (4,1) -- (4,3) -- (3,4) -- (2,4) -- cycle;
      \end{tikzpicture}
      \subcaption{Alternative continuous.}
      \label{ex1c}
    \end{subfigure}
  \end{tabular}
  \caption{Example budgeted uncertainty sets.}
  \label{fig:ex}
\end{figure}

Min-max problems as defined in \eqref{eq:minmax} are static in the sense that once a decision is made, it is impossible to react to further developments. In two-stage robust optimization, as first introduced in \cite{ben2004adjustable}, it is possible to decide some variables after the uncertainty has been revealed; see also the survey \cite{yanikouglu2019survey}. In the context of combinatorial optimization, we define the set $Y(x)$ as all second-stage solution vectors that can be obtained for a given first-stage solution~$x$, i.e., $Y(x) = \{ y\in\{0,1\}^n : x+y \in X\}$. The set of feasible first-stage solutions is defined as those vectors $x$ where feasible second-stage solutions exist, i.e., $X' = \{ x\in\{0,1\}^n : Y(x)\neq\emptyset\}$. In the context of multi-representative selection problems, we have
\[ X' = \{x\in\{0,1\}^n : \sum_{i\in T_j} x_i \le p_j \ \forall j\in[m] \} \]
and
\[ Y(x) = \{ y\in\{0,1\}^n : \sum_{i\in T_j} (x_i + y_i) = p_j\ \forall j\in[m],\ x_i + y_i \le 1\ \forall i \in [n] \}. \]
Then, the two-stage robust problem is to solve
\begin{equation}
\min_{x\in X'} \left( \sum_{i=1}^n C_ix_i + \max_{c\in U} \min_{y\in Y(x)} \sum_{i=1}^n c_i y_i \right) \tag{TwoStage}\label{eq:twostage}
\end{equation}
for some given first-stage cost vector $C\in\Rge^n$.

Two-stage selection problems remain some of the few known cases where it is possible to achieve polynomial-time solvability. In  \cite{ChasseinEtAl2018}, it was shown that the two-stage selection problem with alternative continuous budgeted uncertainty can be solved in polynomial time. On the other hand, in \cite{GoerigkLendlWulf2022}, a proof has been given that the same problem with discrete budgeted uncertainty is NP-hard.  In \cite{goerigk2022robust}, hardness was shown for general polyhedral uncertainty sets in outer description, while the case of inner descriptions was discussed in \cite{goerigk2020two}. A polynomial-time algorithm was presented for two-stage representative selection with alternative continuous budgeted uncertainty in \cite{goerigk2022robust}.

To the best of our knowledge, no other positive or negative complexity results for two-stage selection problems with budgeted uncertainty are known. Other decision criteria, such as min-max regret, have been studied in the context of selection problems as well; see \cite{averbakh2004interval} for positive and \cite{averbakh2001complexity} for negative results. Two-stage selection problems with other types of uncertainty sets have been studied in \cite{kasperski2017robust}, and the so-called min-max-min selection problem with continuous budgeted uncertainty was studied in \cite{brauner2024single}. Single-stage robust variants of the selection problem with budgeted uncertainty, which are motivated by the two-stage setting, have been introduced and investigated in \cite{LhommeEtAl2025}.

In this paper, we give a comprehensive answer to all open complexity questions for two-stage robust selection problems with budgeted uncertainty. Table~\ref{table:overview} presents an overview on known and new complexity results for these problems. Observe that hardness results for special cases (selection, representative selection) extend to hardness results for the more general case (multi-representative selection), while polynomial-time solvability extends from the general case towards the special cases.

\begin{table}
  \begin{center}
    \begin{tabular}{rlll}
      \toprule
      & Discrete (D) & Continuous (C) & Alt. Cont. (AC) \\
      \midrule
      Repr. Sel. (2RS) & NP-hard (Thm.~\ref{thm:discrete-uncertainty-hard}) & P (Thm.~\ref{th:reprcontinP}) & P \cite{goerigk2022robust} \\
      Sel. (2S) & NP-hard \cite{GoerigkLendlWulf2022} & NP-hard (Thm.~\ref{thm_hardness})& P  \cite{ChasseinEtAl2018} \\
      Multi-Repr. Sel. (2MRS) & NP-hard \cite{GoerigkLendlWulf2022} & NP-hard (Thm.~\ref{thm_hardness}) & P (Thm.~\ref{th:multialtinP}) \\
      \bottomrule
    \end{tabular}
    \caption{Overview of known and new complexity results for two-stage robust selection problems with budgeted uncertainty.}
    \label{table:overview}
  \end{center}
\end{table}

In particular, we prove that the complexity of the selection and representative selection problems may differ, and so does the complexity between the two variants of continuous budgeted uncertainty sets. These are the first results of this kind, as far as we are aware. Our results also provide a partial answer to the open problem number 7 in \cite{robook}, which is to determine the complexity of two-stage and recoverable robust selection problems with continuous budgeted uncertainty. Our reduction requires a deeper understanding of how the optimal value function behaves depending on the choice of a dual variable, which may explain why these complexity questions have remained open for quite some time (the related recoverable case was first noted as an open problem in \cite{busing2011recoverable}).
Finally, our main hardness result implies that also the two-stage assignment problem with continuous budgeted uncertainty is NP-hard (see Corollary~\ref{cor_hardness_assignment}), which gives a partial answer to the open problem number 15 in \cite{robook}.

Throughout the paper, we use the following shorthands to denote problems we consider. We write ``2RS'' for two-stage representative selection, ``2S'' for two-stage selection, and ``2MRS'' for two-stage multi-representative selection. For the uncertainty sets, we write ``D'' for discrete, ``C'' for continuous, and ``AC'' for alternative continuous budgeted uncertainty. These two problem name components are connected with a dash, e.g., we write (2RS-C) for the two-stage robust representative selection problem with continuous budgeted uncertainty.

The remainder of this paper is structured as follows. In Section~\ref{sec_reformulation}, 
we discuss how to reformulate the two-stage problem \eqref{eq:twostage}, which is central for the further discussion.
In the following sections, we derive the complexity results of Table~\ref{table:overview}. 
We provide polynomial-time algorithms for (2RS-C) in Section~\ref{sec_poly_alg} and discuss the hardness of (2S-C) in Section~\ref{sec:hardnesscont} (which implies hardness of (2MRS-C) as well).
We then show that (2MRS-AC) can be solved in polynomial time in Section~\ref{sec:multi-alt}, before showing hardness of (2RS-D) in Section~\ref{sec:hardnessdisc}. We conclude our paper in Section~\ref{sec:conclusions}.

\section{Reformulation as a Mixed-Integer Linear Program}
\label{sec_reformulation}

In this section, we derive mixed-integer linear programming reformulations of the problems (2MRS-C) and (2MRS-AC), following a standard dualization procedure. The reformulations will be used as the basis for the results in the later sections.

As before, we denote by $n$ the number of items, which are partitioned into
the sets $[n] = T_1 \cup \dots \cup T_m$. Let first-stage costs $C \in \Rge^n$ and an uncertainty set $U\subseteq \Rge^n$ be given. Recall that the two-stage problem we consider is of the form
\[
  \min_{x \in X'} \left( \sum_{i = 1}^n C_i x_i + \max_{c \in U} \min_{y \in Y(x)} \sum_{i = 1}^n c_i y_i\right),
\]
where $X' = \{x \in \{0, 1\}^n : \sum_{i \in T_j} x_i \le p_j ~\forall j \in [m]\}$
and $Y(x) = \{y \in \{0, 1\}^n : \sum_{i \in T_j} (x_i + y_i) = p_j ~\forall j \in [m],\ x_i + y_i \le 1\ \forall i \in [n]\}$ for $x \in X'$.

Observe that
the second-stage problem $\min_{y \in Y(x)} \sum_{i = 1}^n c_i y_i$,
for any fixed $x \in X'$ and $c \in U$,
is equivalent to its linear programming relaxation,
i.e., the integrality of $y$ can be relaxed,
because the matrix in the description of $Y(x)$ is totally unimodular.
The dual of this linear program can be written as follows:
\begin{align*}
  \max~ & \sum_{i = 1}^m \alpha_j (p_j - \sum_{i \in T_j} x_i) + \sum_{i = 1}^n \beta_i (1 - x_i) \\
  \st~ & \alpha_j + \beta_i \le c_i & \forall j \in [m], i \in T_j \\
        & \alpha \in \R^m, \beta \in \Rle^n
\end{align*}

For ease of presentation, let us begin with continuous budgeted uncertainty sets.
The adversarial problem $\max_{c \in U} \min_{y \in Y(x)} \sum_{i = 1}^n c_i y_i$,
for any fixed $x \in X'$,
is then equivalent to the following linear program:
\begin{align*}
  \max~ & \sum_{i = 1}^m \alpha_j (p_j - \sum_{i \in T_j} x_i) + \sum_{i = 1}^n \beta_i (1 - x_i) \\
  \st~ & \alpha_j + \beta_i \le \underline{c}_i + \delta_i d_i & \forall j \in [m], i \in T_j \\
        & \sum_{i = 1}^n \delta_i \le \Gamma \\
        & \alpha \in \R^m, \beta \in \Rle^n, \delta \in [0, 1]^n
\end{align*}
The dual of this linear program can be written as follows:
\begin{align*}
  \min~ & \sum_{i = 1}^n \underline{c}_i y_i + \Gamma \pi + \sum_{i = 1}^n \rho_i \\
  \st~ & \sum_{i \in T_j} (x_i + y_i) = p_j & \forall j \in [m] \\
        & x_i + y_i \le 1 & \forall i \in [n] \\
        & \pi + \rho_i \ge d_i y_i & \forall i \in [n] \\
        & y \in [0, 1]^n, \pi \in \Rge, \rho \in \Rge^n
\end{align*}
Combining this formulation with the first-stage decision
finally results in the following mixed-integer programming formulation
of (2MRS-C):
\begin{align*} \label{2MRS-C} \tag{2MRS-C}
  \min~ & \sum_{i = 1}^n C_i x_i + \sum_{i = 1}^n \underline{c}_i y_i + \Gamma \pi + \sum_{i = 1}^n \rho_i \\
  \st~ & \sum_{i \in T_j} (x_i + y_i) = p_j & \forall j \in [m] \\
        & x_i + y_i \le 1 & \forall i \in [n] \\
        & \pi + \rho_i \ge d_i y_i & \forall i \in [n] \\
        & x \in \{0, 1\}^n, y \in [0, 1]^n, \pi \in \Rge, \rho \in \Rge^n
\end{align*}
For (2MRS-AC), the same dualization procedure gives the following problem formulation:
\begin{align*} \label{2MRS-AC} \tag{2MRS-AC}
  \min~ & \sum_{i = 1}^n C_i x_i + \sum_{i = 1}^n \underline{c}_i y_i + \Gamma \pi + \sum_{i = 1}^n d_i \rho_i \\
  \st~ & \sum_{i \in T_j} (x_i + y_i) = p_j & \forall j \in [m] \\
        & x_i + y_i \le 1 & \forall i \in [n] \\
        & \pi + \rho_i \ge y_i & \forall i \in [n] \\
        & x \in \{0, 1\}^n, y \in [0, 1]^n, \pi \in \Rge, \rho \in \Rge^n
\end{align*}
Note that the position of the $d_i$ coefficients has changed. For discrete budgeted uncertainty sets, compact reformulations are possible (see \cite{ChasseinEtAl2018}), but not necessary for this paper.

\section{Representative Selection with Continuous Budgeted Sets}
\label{sec_poly_alg}

In this section, we consider problem (2RS-C), i.e., problem~\eqref{2MRS-C} with $p_j=1$ for all $j \in [m]$.
Note that the constraints $x_i + y_i \le 1$ in \eqref{2MRS-C} are redundant in this case.
We show that (2RS-C)
can be solved in time~$\bigO(n \log n)$
by a combinatorial algorithm.
As input, we are given cost coefficients $C_i, \underline c_i, d_i \in \Rge$ for $i \in [n]$, together with a budget parameter~$\Gamma \in \Rge$ and a partition $[n] = T_1 \cup \dots \cup T_m$.
In order to reduce the technical details necessary to present our result, throughout this section, we make the simplifying assumption that $d_i \neq 0$ for all $i \in [n]$.
Hence, division by $d_i$ is always defined. We explain in the end of the section how this assumption can be removed.

Observe that,
if $\pi \in \Rge$ is fixed,
the problem decomposes into $m$ independent problems,
one for each $j \in [m]$:
\begin{align*}
  \min~ & \sum_{i \in T_j} C_i x_i + \sum_{i \in T_j} \underline{c}_i y_i + \sum_{i \in T_j} \rho_i \\
  \st~ & \sum_{i \in T_j} (x_i + y_i) = 1 \\
        & \pi + \rho_i \ge d_i y_i & \forall i \in T_j \\
        & x \in \{0, 1\}^{T_j}, y \in [0, 1]^{T_j}, \rho \in \Rge^{T_j}
\end{align*}

For an optimal solution to this problem,
there are the following two options
(whichever is cheaper):
\begin{itemize}
\item In the first case,
one item~$i \in T_j$ is selected in the first stage,
i.e., $x_i = 1$ and all other $x$ and $y$ variables are set to $0$.
Then $i$ must be an item with minimal first-stage cost~$C_i = \min_{k \in T_j} C_k$.
Moreover,
$\rho$ only has to satisfy the constraints $\rho_i \ge - \pi$ for all $i \in T_j$
and can therefore be set to $0$.
\item In the second case,
no item is selected in the first stage,
i.e., $x_i = 0$ for all $i \in T_j$.
We then get an optimal solution of the following problem \eqref{fjpi}.
Denote its optimal value,
depending on $\pi$,
by the function~$f_j \colon \Rge \to \Rge$.
\begin{align*} \tag{\textasteriskcentered} \label{fjpi}
  f_j(\pi) := \min~ & \sum_{i \in T_j} \underline{c}_i y_i + \sum_{i \in T_j} \rho_i \\
  \st~ & \sum_{i \in T_j} y_i = 1 \\
        & \pi + \rho_i \ge d_i y_i & \forall i \in T_j \\
        & y \in [0, 1]^{T_j}, \rho \in \Rge^{T_j}
\end{align*}
\end{itemize}

By the above observations,
the minimal achievable value of (2RS-C),
for any fixed $\pi$,
can be described by the function $f \colon \Rge \to \Rge$ with
\begin{align*} \tag{\textasteriskcentered \textasteriskcentered} \label{fpi}
  f(\pi) := \Gamma \pi + \sum_{j = 1}^m \min \{ \min_{k \in T_j} C_k, f_j(\pi) \}.
\end{align*}
The optimal value of (2RS-C)
is thus given by $\min_{\pi \in \Rge} f(\pi)$.

In the following,
we will study the functions $f_j$
and show that they are piecewise linear and continuous.
Hence,
also the function $f$ is piecewise linear and continuous,
and its minimum can be found by iterating over its breakpoints.
By describing how to compute the sets of the breakpoints of the functions $f_j$
and then of the function $f$,
we show how this can be done in polynomial time.

We first reformulate \eqref{fjpi} into the following equivalent problem:
\begin{align*} \tag{\textasteriskcentered'} \label{fjpi'}
  \min~ & \sum_{i \in T_j} \underline{c}_i \underline{y}_i + \sum_{i \in T_j} \overline{c}_i \overline{y}_i \\
  \st~ & \sum_{i \in T_j} (\underline{y}_i + \overline{y}_i) = 1 \\
        & \underline{y}_i \in [0, \pi_i] & \forall i \in T_j \\
        & \overline{y}_i \in [0, 1 - \pi_i] & \forall i \in T_j,
\end{align*}
where $\pi_i := \min\{1, \pi/d_i\}$ for all $i \in T_j$.
Recall that $\overline{c} = \underline{c} + d$.

\begin{lemma} \label{lem_fjpi_fjpi'}
  The problems \eqref{fjpi} and \eqref{fjpi'} are equivalent.
\end{lemma}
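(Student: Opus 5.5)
I will show that the two problems have the same optimal value for every fixed $\pi \ge 0$. The approach is to eliminate $\rho$ from \eqref{fjpi}, which reduces it to a separable convex problem in $y$, and then to identify that problem with \eqref{fjpi'}.

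\textbf{Eliminating $\rho$.} In \eqref{fjpi}, the variables $\rho_i$ appear only in the objective, with positive coefficient, and in the constraints $\rho_i \ge d_i y_i - \pi$ and $\rho_i \ge 0$. An optimal choice is therefore $\rho_i = \max\{0, d_i y_i - \pi\}$. This turns \eqref{fjpi} into
\[
\min \Big\{ \sum_{i\in T_j} g_i(y_i) : \textstyle\sum_{i\in T_j} y_i = 1,\ y\in[0,1]^{T_j} \Big\}, \qquad g_i(t) := \underline{c}_i t + \max\{0, d_i t - \pi\}.
\]
On $[0,1]$, the function $g_i$ is piecewise linear and convex. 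It has slope $\underline c_i$ on $[0,\pi_i]$ and slope $\underline c_i + d_i = \overline c_i$ on $[\pi_i,1]$, where $\pi_i = \min\{1,\pi/d_i\}$. The breakpoint $\pi/d_i$ lies in $[0,1]$ exactly when $\pi \le d_i$; otherwise only the first slope occurs, which matches $\pi_i = 1$. Here we use the standing assumption $d_i \neq 0$.

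\textbf{Comparing with \eqref{fjpi'}.} Split each $y_i$ into $\underline y_i + \overline y_i$ with $\underline y_i\in[0,\pi_i]$ and $\overline y_i\in[0,1-\pi_i]$. I claim
\[
g_i(t) = \min\{\underline c_i \underline y_i + \overline c_i \overline y_i : \underline y_i + \overline y_i = t,\ \underline y_i\in[0,\pi_i],\ \overline y_i\in[0,1-\pi_i]\}
\]
for every $t\in[0,1]$. The proof has two inequalities.
\begin{itemize}
\item For $\ge$: take the greedy split $\underline y_i = \min\{t,\pi_i\}$, $\overline y_i = t - \underline y_i$. A direct check shows its cost equals $g_i(t)$.
\item For $\le$: use $\overline c_i \ge \underline c_i$, which holds because $d_i \ge 0$. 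Any feasible split has $\underline y_i \le \min\{t,\pi_i\}$. Moving mass from $\overline y_i$ to $\underline y_i$ does not increase the cost, so every feasible split costs at least as much as the greedy one.
\end{itemize}

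\textbf{Conclusion.} Given an optimal $y$ for \eqref{fjpi}, the greedy split produces a feasible solution of \eqref{fjpi'} with the same value. Conversely, given a feasible solution of \eqref{fjpi'}, set $y_i := \underline y_i + \overline y_i$ and $\rho_i := \max\{0, d_i y_i - \pi\}$. This is feasible for \eqref{fjpi}, and its value is $\sum_i g_i(y_i)$, which is at most the value in \eqref{fjpi'} by the claim. Hence the optimal values coincide, and solutions translate between the two problems.

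\textbf{Main obstacle.} The only delicate point is verifying that the greedy split reproduces $g_i$. Concretely, one must check that $\max\{0, d_i t - \pi\} = d_i \max\{0, t - \pi_i\}$ for $t\in[0,1]$, including the case $\pi > d_i$ where $\pi_i = 1$ and both sides vanish. This is a two-case computation.
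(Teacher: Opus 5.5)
Your proof is correct and follows essentially the same route as the paper. Both directions use the greedy split $\underline y_i = \min\{y_i,\pi_i\}$, $\overline y_i = y_i - \underline y_i$, together with the optimal choice $\rho_i = \max\{0, d_i y_i - \pi\}$. Your per-coordinate inequality (every feasible split costs at least the greedy one, because $\overline c_i \ge \underline c_i$) is an explicit version of the paper's ``we may assume'' exchange step for optimal solutions of \eqref{fjpi'}; stating it as the identity for $g_i(t)$ is slightly cleaner, since the value comparison then holds for arbitrary feasible solutions and not only optimal ones.
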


\begin{proof}
First, let $y, \rho$ be an optimal solution to \eqref{fjpi}.
Define $\underline{y}_i$ and $\overline{y}_i$ as follows, for all $i \in T_j$:
\begin{itemize}
\item If $y_i \le \pi_i$, then set $\underline{y}_i := y_i$ and $\overline{y}_i := 0$.
\item If $y_i > \pi_i$, then set $\underline{y}_i := \pi_i$ and $\overline{y}_i := y_i - \pi_i$.
\end{itemize}
Note that this always ensures $y = \underline{y} + \overline{y}$.
Therefore, $\underline{y}$ and $\overline{y}$ form a feasible solution to \eqref{fjpi'}.

The above two cases are closely related to
the constraints $\pi + \rho_i \ge d_i y_i$ in \eqref{fjpi}:
\begin{itemize}
\item If $y_i \le \pi_i$, then $y_i \le \pi/d_i$ and it is optimal to set $\rho_i = 0$.
\item If $y_i > \pi_i$, which is only possible if $\pi_i = \pi/d_i < 1$, then it is optimal to set $\rho_i = d_i y_i - \pi = d_i (y_i - \pi_i)$.
\end{itemize}
Hence, we may assume that $\rho$ from the given optimal solution has these properties.
This implies that the objective value of the solution $\underline{y}, \overline{y}$ in \eqref{fjpi'} is
\begin{align*}
  \sum_{i \in T_j} \underline{c}_i \underline{y}_i + \sum_{i \in T_j} \overline{c}_i \overline{y}_i
  &= \sum_{i \in T_j} \underline{c}_i y_i + \sum_{i \in T_j} d_i \overline{y}_i \\
  &= \sum_{i \in T_j} \underline{c}_i y_i + \sum_{i \in T_j, y_i > \pi_i} d_i (y_i - \pi_i) \\
  &= \sum_{i \in T_j} \underline{c}_i y_i + \sum_{i \in T_j} \rho_i,
\end{align*}
i.e., the same value as the objective value of $y, \rho$ in \eqref{fjpi}.

For the other direction,
let $\underline{y}$ and $\overline{y}$ be an optimal solution to \eqref{fjpi'}.
Set $y: = \underline{y} + \overline{y}$,
and define $\rho_i$ as follows, for all $i \in T_j$:
\begin{itemize}
\item If $\underline{y}_i + \overline{y}_i \le \pi_i$, then set $\rho_i: = 0$.
\item If $\underline{y}_i + \overline{y}_i > \pi_i$, then set $\rho_i: = d_i y_i - \pi$.
\end{itemize}
This is a feasible solution to \eqref{fjpi}.

Observe that,
as $\underline{y}$ and $\overline{y}$ form an optimal solution of \eqref{fjpi'}
and $\underline{c}_i \le \overline{c}_i$,
we may assume that the following holds for all $i \in T_j$:
\begin{itemize}
\item If $\underline{y}_i + \overline{y}_i \le \pi_i$, then $\overline{y}_i = 0$.
\item If $\underline{y}_i + \overline{y}_i > \pi_i$, then $\underline{y}_i = \pi_i$ and $\pi_i = \pi/d_i < 1$.
\end{itemize}
Therefore,
the objective value of the solution $y, \rho$ in \eqref{fjpi}
can be written as
\begin{align*}
  \sum_{i \in T_j} \underline{c}_i y_i + \sum_{i \in T_j} \rho_i
  &= \sum_{i \in T_j} \underline{c}_i (\underline{y}_i + \overline{y}_i) + \sum_{i \in T_j, \underline{y}_i + \overline{y}_i > \pi_i} (d_i (\underline{y}_i + \overline{y}_i) - \pi) \\
  &= \sum_{i \in T_j} \underline{c}_i \underline{y}_i + \sum_{i \in T_j} \underline{c}_i \overline{y}_i + \sum_{i \in T_j, \underline{y}_i + \overline{y}_i > \pi_i} (d_i \underline{y}_i - \pi) + \sum_{i \in T_j} d_i \overline{y}_i \\
  &= \sum_{i \in T_j} \underline{c}_i \underline{y}_i + \sum_{i \in T_j} \overline{c}_i \overline{y}_i.
\end{align*}

Thus, both problems \eqref{fjpi} and \eqref{fjpi'} have the same optimal value,
and any optimal solution to~\eqref{fjpi} can easily be transformed into an optimal solution to \eqref{fjpi'},
and vice versa.
\end{proof}

Scaling the ranges of all variables in \eqref{fjpi'} to $[0, 1]$
leads to another equivalent problem formulation,
which now has the structure of a continuous knapsack problem (in the minimization version):
\begin{align*} \tag{\textasteriskcentered''} \label{fjpi''}
  \min~ & \sum_{i \in T_j} \underline{c}_i \pi_i \underline{z}_i + \sum_{i \in T_j} \overline{c}_i (1 - \pi_i) \overline{z}_i \\
  \st~ & \sum_{i \in T_j} (\pi_i \underline{z}_i + (1 - \pi_i) \overline{z}_i) = 1 \\
        & \underline{z}_i, \overline{z}_i \in [0, 1] & \forall i \in T_j
\end{align*}

\begin{lemma} \label{lem_fjpi'_fjpi''}
  The problems \eqref{fjpi'} and \eqref{fjpi''} are equivalent.
\end{lemma}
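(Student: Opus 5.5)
The equivalence comes from a linear change of variables that rescales each box $[0,\pi_i]$ and $[0,1-\pi_i]$ to $[0,1]$. For each $i \in T_j$ I set
\[
  \underline{y}_i = \pi_i \underline{z}_i, \qquad \overline{y}_i = (1-\pi_i)\overline{z}_i .
\]
I then check that this map is a bijection between the feasible sets of \eqref{fjpi'} and \eqref{fjpi''} and that it preserves the objective value. It follows that both problems have the same optimal value and that optimal solutions correspond.

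First I handle the direction from \eqref{fjpi''} to \eqref{fjpi'}. Given feasible $\underline{z},\overline{z} \in [0,1]^{T_j}$, the definitions give $\underline{y}_i \in [0,\pi_i]$ and $\overline{y}_i \in [0,1-\pi_i]$, since $\pi_i \in [0,1]$. The equality constraint $\sum_{i}(\pi_i\underline{z}_i + (1-\pi_i)\overline{z}_i) = 1$ becomes exactly $\sum_i (\underline{y}_i + \overline{y}_i) = 1$. Term by term, the objective $\sum_i \underline{c}_i\pi_i\underline{z}_i + \overline{c}_i(1-\pi_i)\overline{z}_i$ equals $\sum_i \underline{c}_i\underline{y}_i + \overline{c}_i\overline{y}_i$.

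For the converse direction I invert the map. The only subtlety, and the main point to handle carefully, is that $\pi_i$ or $1-\pi_i$ may be zero, so the inverse is not defined by division everywhere.
\begin{itemize}
  \item If $\pi_i > 0$, set $\underline{z}_i := \underline{y}_i/\pi_i \in [0,1]$.
  \item If $\pi_i = 0$, the constraint forces $\underline{y}_i = 0$. I then set $\underline{z}_i := 0$; any value in $[0,1]$ would work.
  \item The variable $\overline{z}_i$ is treated analogously: if $1-\pi_i = 0$, i.e.\ $\pi_i = 1$, the constraint forces $\overline{y}_i = 0$ and I set $\overline{z}_i := 0$.
\end{itemize}
In every case $\pi_i\underline{z}_i = \underline{y}_i$ and $(1-\pi_i)\overline{z}_i = \overline{y}_i$ hold. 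The same term-by-term identities then show that feasibility and objective values carry over.

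Since every feasible solution of either problem maps to a feasible solution of the other with the same objective value, the optimal values coincide and optimal solutions transform into each other. This is the same notion of equivalence as in Lemma~\ref{lem_fjpi_fjpi'}. No step is really an obstacle; the degenerate cases $\pi_i \in \{0,1\}$ are the only thing requiring attention.
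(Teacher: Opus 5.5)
Your proposal is correct and follows essentially the same route as the paper. The paper also uses the rescaling $\underline{y}_i = \pi_i \underline{z}_i$, $\overline{y}_i = (1-\pi_i)\overline{z}_i$ in both directions, and it also handles the degenerate cases ($\pi = 0$, and $\pi_i = 1$ for $\overline{z}_i$) by setting the undetermined $z$-variable to $0$. One small imprecision: the map is not literally a bijection when $\pi_i \in \{0,1\}$, since the corresponding $z$-variable is then arbitrary, as you note yourself. This does no harm, because your argument only uses the two feasibility- and value-preserving directions.
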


\begin{proof}
  First observe that both problems are equivalent for $\pi=0$, so let us assume $\pi>0$.
  Given an optimal solution $\underline{y}, \overline{y}$ to \eqref{fjpi'},
  setting $\underline{z}_i := \underline{y}_i / \pi_i$ and $\overline{z}_i := \overline{y}_i / (1 - \pi_i)$ (or $\overline{z}_i := 0$ in case $\pi_i = 1$)
  leads to a feasible solution to \eqref{fjpi''} of the same objective value.
  
  Analogously,
  given an optimal solution $\underline{z}, \overline{z}$ to \eqref{fjpi''},
  setting $\underline{y}_i := \pi_i \underline{z}_i$ and $\overline{y}_i := (1 - \pi_i) \overline{z}_i$
  leads to a feasible solution to \eqref{fjpi'} of the same objective value.
\end{proof}

The structure of the problem \eqref{fjpi''} is now simple enough that it can be solved in a greedy fashion,
analogously to Dantzig's algorithm for the continuous knapsack problem~\cite{Dantzig1957}:
Sort the $2 \lvert T_j \rvert$ items corresponding to the variables $\underline{z}_i$ and $\overline{z}_i$
by their ratios of cost (i.e., the variable's coefficient in the objective function) and size (i.e., the variable's coefficient in the constraint),
in a nondecreasing order.
Formally, consider the set $Z := \bigcup_{i \in T_j}\set{\underline z_i, \overline z_i}$.
For $i \in T_j$, let $\ratio(\underline z_i) :=   \underline{c}_i$ (which is equal to $\underline{c}_i \pi_i / \pi_i$ for $\pi_i \neq 0$) 
and $\ratio(\overline z_i) := \overline{c}_i$ (which is equal to $\overline{c}_i (1 - \pi_i) / (1 - \pi_i)$ for $\pi_i \neq 1$).
The algorithm computes an order $O_Z$ of $Z$ that sorts the items in $Z$ nondecreasingly by their ratio. Then it selects items in this order until the total size of $1$ in the constraint is reached, possibly selecting the last item only fractionally.

\begin{figure}
  \centering
  \includegraphics[scale=1.05]{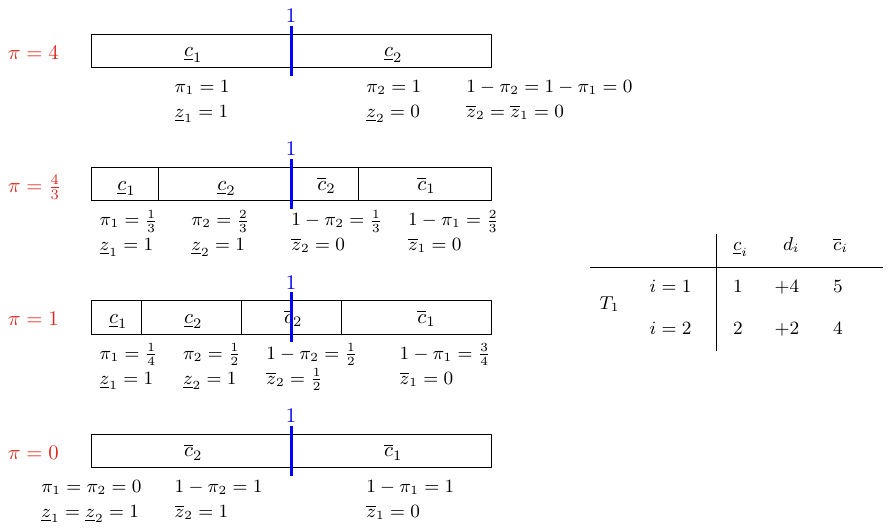}
  \caption{Problem \eqref{fjpi''} can be understood as a continuous min-knapsack problem dependent on parameter $\pi$.
    We want to pack 1 unit size into the knapsack while minimizing the cost. Index $i$ is associated with two items of size $\pi_i = \min\{1,\pi/d_i\}$ and $1 - \pi_i$, respectively.
    We depict an example instance with $m=1$ and $\lvert T_1 \rvert=2$ for different values of $\pi$.
    The order of the four items is given by their ratios $\underline{c}_1 = 1 < \underline{c}_2 = 2 < \overline{c}_2 = 4 < \overline{c}_1 = 5$.}
  \label{fig:parametric-knapsack}
\end{figure}

Observe that $\ratio(z)$ is actually independent of $\pi$ for all $z \in Z$.
Hence, when the problem~\eqref{fjpi''} is solved for different values of $\pi$,
we may assume that the order $O_Z$ of the items does not change.
Only the second part of the algorithm behaves differently,
as the items' sizes change and therefore the total size of $1$ might be reached earlier or later when iterating through the ordered items.
Figure~\ref{fig:parametric-knapsack} showcases an example.

Based on these insights,
we will now investigate in more detail
how the optimal value of \eqref{fjpi''} changes when $\pi$ changes.
Because of Lemmas \ref{lem_fjpi_fjpi'} and \ref{lem_fjpi'_fjpi''},
this tells us how the function $f_j$ behaves.

For ease of notation, we assume $T_j = [n_j]$ for $n_j := \lvert T_j \rvert$ in the following.
Since $\underline{c}_i \le \overline{c}_i$ for all $i \in T_j$,
we may assume,
without loss of generality,
that the order of the ratios corresponding to~$O_Z$ starts with $\underline{c}_1 \le \underline{c}_2 \le \dots \le \underline{c}_r \le \overline{c}_s$,
for some $r \in T_j = [n_j]$ and some $s \in [r]$.
It will turn out that the algorithm never packs anything that appears later in the order, for any $\pi$.

We start by considering large values of $\pi$ and, step by step, study the algorithm's behavior when decreasing $\pi$.
(Figure~\ref{fig:parametric-knapsack} shows an example of this evolution and Figure~\ref{fig:function-fj-a} depicts the corresponding function $f_j(\pi)$ for this example.)
If $\pi \ge d_1$,
then $\pi_1 = 1$ and the algorithm only packs the first item, i.e., sets $\underline{z}_1 = 1$ and all other $z$ variables to $0$.
The cost of this solution is $\underline{c}_1$.
Hence, for $\pi \ge d_1$, the function $f_j$ is constant with $f_j(\pi) = \underline{c}_1$.

When decreasing $\pi$, i.e., when setting $\pi = d_1 - \varepsilon$ for some (small) $\varepsilon > 0$,
then $\pi_1 = \pi/d_1$ becomes smaller than $1$.
Therefore, the algorithm starts to pack the second item in order to reach the required total size of $1$.
Since both the first and the second item become smaller when decreasing $\pi$,
solutions consisting of the first item and some (increasing) fraction of the second item are selected
until $\pi$ is so small that the two items have a total size of 1.
This happens when $\pi_1 + \pi_2 = \pi/d_1 + \pi/d_2 = 1$, i.e., at $\pi = 1/(1/d_1 + 1/d_2)$.

For $\pi \in [1/(1/d_1 + 1/d_2), d_1]$,
we can describe the solutions and their costs more precisely as follows:
The fraction of the second item that is needed to fill up to a total size of 1
is $\underline{z}_2 = (1 - \pi_1)/\pi_2 = (1 - \pi/d_1)/\pi_2$.
The total cost of the solution is therefore $f_j(\pi) = \underline{c}_1 \pi_1 + \underline{c}_2 \pi_2 \underline{z}_2 = \underline{c}_1 \pi/d_1 + \underline{c}_2 (1 - \pi/d_1) = \underline{c}_2 + \pi (\underline{c}_1 - \underline{c}_2)/d_1$.
This shows that the function $f_j$ is linear with a slope of $(\underline{c}_1 - \underline{c}_2)/d_1 \le 0$
in the range $\pi \in [1/(1/d_1 + 1/d_2), d_1]$.
Note that the fraction $\underline{z}_2$ that is selected of the second item changes nonlinearly,
while the size and the cost of the fractional item change linearly.
When decreasing $\pi$ further, starting from $\pi = 1/(1/d_1 + 1/d_2)$,
the third item starts to be packed in a similar fashion.

Generalizing the observations from the previous paragraphs,
we obtain the following Lemma~\ref{lem_fj}.
Given $j \in [m]$, let us define the order $O_Z = O_Z^{(j)}$ as above.
Set $n_j := \lvert T_j \rvert$ and let $d_1^{(j)}, \dots, d^{(j)}_{n_j}$ be a reordering of the coefficients $(d_i)_{i \in T_j}$ according to the appearance of the corresponding variables~$(\underline{z}_i)_{i \in T_j}$ in the order~$O_Z^{(j)}$.
Moreover, define the indices~$r^{(j)}, s^{(j)} \in [n_j]$ as above, corresponding to the beginning of the order~$O_Z^{(j)}$.
Now define the values
\[
  b_l^{(j)} := \left(\sum_{l' = 1}^l \frac{1}{d^{(j)}_{l'}} \right)^{-1} \text{ for all } l \in [r^{(j)}], \quad b_{r^{(j)} + 1}^{(j)} := 0,
\]
and the set 
\[
  B_j := \left\{b_l^{(j)} : l \in [r^{(j)} + 1]\right\}.
\]
Then $B_j$ has size $\bigO(n_j)$ and we can show the following result.

\begin{lemma} \label{lem_fj}
  For all $j \in [m]$, the function $f_j$ is continuous, convex, and piecewise linear with breakpoints in the set $B_j$.
  The function values in the breakpoints are given by
  \[
    f_j(b_l^{(j)}) = b_l^{(j)} \sum_{l' = 1}^l \frac{\underline{c}_{l'}}{d_{l'}^{(j)}} \text{ for all } l \in [r^{(j)}], \quad f_j(b_{r^{(j)} + 1}^{(j)}) = \overline{c}_{s^{(j)}}.
  \]
\end{lemma}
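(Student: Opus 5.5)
By Lemmas~\ref{lem_fjpi_fjpi'} and~\ref{lem_fjpi'_fjpi''}, $f_j(\pi)$ equals the optimal value of \eqref{fjpi''}. That problem is a continuous min-knapsack, so Dantzig's greedy rule along the fixed order $O_Z$ solves it exactly for every $\pi$. The plan is to write down the greedy solution explicitly on each interval between consecutive elements of $B_j$ and read off a linear formula. Drop the superscript $(j)$, and write $b_0 := \infty$, so that $b_0 > b_1 > \dots > b_r > b_{r+1} = 0$.

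\textbf{Key combinatorial fact.} For $\pi \in [b_{l+1}, b_l]$ with $l \le r-1$, we have $\sum_{l'\le l}\pi/d_{l'} \le 1 \le \sum_{l'\le l+1}\pi/d_{l'}$. Moreover $\pi \le b_l \le d_{l'}$ for $l' \le l$, so $\pi_{l'} = \pi/d_{l'}$. Hence the greedy packs $\underline z_1,\dots,\underline z_l$ fully and $\underline z_{l+1}$ fractionally, which fills exactly size $1$. It never reaches $\overline z_s$, since $\overline z_s$ comes after $\underline z_r$ in $O_Z$ and $l+1 \le r$.

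The cost on this interval is
\[
\sum_{l'\le l}\underline c_{l'}\frac{\pi}{d_{l'}} + \underline c_{l+1}\Bigl(1-\sum_{l'\le l}\frac{\pi}{d_{l'}}\Bigr),
\]
which is linear in $\pi$. For $l=0$ (i.e.\ $\pi \ge b_1 = d_1$) it is the constant $\underline c_1$. At $\pi = b_l$ the bracket vanishes, giving $f_j(b_l) = b_l\sum_{l'\le l}\underline c_{l'}/d_{l'}$. Evaluating the formula for interval $l$ at its right end $b_l$ and the formula for interval $l-1$ at its left end $b_l$ gives the same value, which yields continuity.

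\textbf{Last interval $\pi\in[0,b_r]$.} Here all of $\underline z_1,\dots,\underline z_r$ fit, with total size $\pi/b_r \le 1$. The next item in $O_Z$ is $\overline z_s$, whose size is $1-\pi_s = 1-\pi/d_s$. One checks $1-\pi/d_s \ge 1-\pi/b_r$, using $b_r \le d_s$ (valid since $s \le r$). So the greedy fills the remainder with a fraction of $\overline z_s$, and the cost is
\[
\sum_{l'\le r}\underline c_{l'}\frac{\pi}{d_{l'}} + \overline c_s\Bigl(1-\frac{\pi}{b_r}\Bigr),
\]
again linear. At $\pi=0$ this equals $\overline c_s$, as claimed. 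At $b_r$ it agrees with the previous piece.

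\textbf{Convexity.} The slope on piece $l$ ($1\le l\le r-1$) is $\sum_{l'\le l}(\underline c_{l'}-\underline c_{l+1})/d_{l'}$. This is nonincreasing in $\pi$'s direction reversed: as $l$ increases, each term decreases because $\underline c_{l+1}$ is nondecreasing in $l$, and the new term $(\underline c_{l+1}-\underline c_{l+1})/d_{l+1}=0$ is added. Hence slopes become more negative as $\pi$ decreases, i.e.\ they increase with $\pi$. On the last piece the slope is $\sum_{l'\le r}(\underline c_{l'}-\overline c_s)/d_{l'}$, which is at most the slope of piece $r-1$ because $\overline c_s \ge \underline c_r$. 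The slope is $0$ for $\pi \ge b_1$, which is at least all the others since every $\underline c_{l'}-\underline c_{l+1}\le 0$. A continuous piecewise-linear function with slopes increasing in $\pi$ is convex.

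\textbf{Main obstacle.} The delicate point is the last interval. One has to justify that the greedy uses $\overline z_s$ rather than $\underline z_{r+1}$ or something else. This holds by the definition of $r,s$ (the order begins $\underline c_1,\dots,\underline c_r,\overline c_s$). One must also verify, via $b_r \le d_s$, that $\overline z_s$ is large enough to complete the packing, so nothing beyond it is ever used. Degenerate ties in $O_Z$ and empty intervals (e.g.\ $r=1$) need a brief remark.
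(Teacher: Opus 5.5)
Your proof is correct. For the breakpoints and their values you follow the paper's route: fix the order $O_Z$, run Dantzig's greedy on \eqref{fjpi''} for each $\pi$, pack $\underline z_1,\dots,\underline z_l$ fully and $\underline z_{l+1}$ fractionally on $[b_{l+1},b_l]$, and let $\overline z_s$ complete the packing on $[0,b_r]$.

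The real difference is how you get continuity and convexity. The paper gets both at once from a general fact: the optimal value of a bounded LP whose right-hand side depends affinely on $\pi$ is convex and piecewise linear in $\pi$. It then uses the greedy only to locate the breakpoints and evaluate $f_j$ there. You instead write $f_j$ as an explicit linear function on each interval and check that adjacent pieces agree at the endpoints. You then prove convexity by showing the slopes $\sum_{l'\le l}(\underline c_{l'}-\underline c_{l+1})/d_{l'}$ and $\sum_{l'\le r}(\underline c_{l'}-\overline c_s)/d_{l'}$ increase with $\pi$. The paper's route is shorter and needs no care with ties or degenerate intervals. Yours is elementary and self-contained, and it makes linearity on each piece explicit, with closed-form slopes. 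The paper only sketches that linearity in the discussion before the lemma.

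One small correction: in your key fact, use $\pi_{l+1}=\min\{1,\pi/d_{l+1}\}$ rather than $\pi/d_{l+1}$, because $\pi\le b_l$ does not imply $\pi\le d_{l+1}$. Nothing breaks. In either case the first $l+1$ items have total size at least $1$, and the fractional item contributes exactly $\underline c_{l+1}\bigl(1-\sum_{l'\le l}\pi/d_{l'}\bigr)$ to the cost.
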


\begin{proof}
  First, by a general argument, each function $f_j$ is continuous, convex, and piecewise linear. Indeed, by definition, its values equal the optimal values of the linear program~\eqref{fjpi} for varying values of $\pi$. For each fixed $\pi$, \eqref{fjpi} is bounded, and $\pi$ appears in its right-hand side vector. More precisely, the right-hand side vector is an affine function with respect to $\pi$. Therefore, the linear program's optimal value, as a function of $\pi$, is piecewise linear and convex (see, e.g., \cite[Section~5.2]{bertsimas1997}).

  The rest of this proof will be concerned with investigating the piecewise linear structure in more detail, in order to describe the breakpoints.
  Fix some $j \in [m]$, and for ease of notation, assume $T_j = [n_j]$ and let $r := r^{(j)}$, $s := s^{(j)}$, and $d_{l'} := d^{(j)}_{l'}$ for $l' \in [n_j]$, i.e., the order $O_Z^{(j)}$ starts with $\underline{c}_1 \le \underline{c}_2 \le \dots \le \underline{c}_r \le \overline{c}_s$.
  Moreover, write $b_l := b_l^{(j)}$ for $l \in [r + 1]$.
  Note that the values~$b_l$, as defined above, are monotonically decreasing in $l$, i.e., the indices give an ordering of the dedicated breakpoints from right to left.
  Denote the actual breakpoints of the piecewise linear function $f_j$ (in the most general case) by $\tilde{b}_1, \tilde{b}_2, \dots$, also from right to left.
  We will argue that these breakpoints $\tilde{b}_l$ agree with the values $b_l$.

  Recall that $\pi_{l'} = \min\{1, \pi/d_{l'}\}$ for all $l' \in [n]$. For $\pi \geq d_1$, we have $\pi_1 = 1$. Therefore, in this range, only the first item is packed and the function $f_j(\pi)$ is constant equal to $\underline c_1$.
  This gives rise to the rightmost breakpoint $\tilde{b}_1 = d_1 = b_1$ with $f_j(b_1) = \underline{c}_1$.
  
  For $l \in \{2, \dots, r\}$, a solution consisting of the first $l - 1$ items and a fraction of the $l$-th item is selected in the range $\pi \in [\tilde{b}_l, \tilde{b}_{l-1}]$. The breakpoint~$\tilde{b}_l$ corresponds to the solution where the first $l$~items are packed. It is therefore characterized by the equation $\sum_{l'=1}^l \pi_{l'} = \sum_{l'=1}^l \pi/d_{l'} = 1$. This equation is clearly solved by $\pi = 1/(\sum_{l' = 1}^l 1/d_{l'}) = b_l$. Hence, $\tilde{b}_l = b_l$ for all $l \in [r]$.
  The cost of the solution where the first $l \in [r]$~items are packed, i.e., the value of $f_j$ at $\pi = b_l$, is given by
  \begin{align*}
    f_j(b_l) = \sum_{l' = 1}^{l} \underline{c}_{l'} \pi_{l'} = \sum_{l' = 1}^l \underline{c}_{l'} \frac{\pi}{d_{l'}} = \sum_{l' = 1}^l \underline{c}_{l'} \frac{b_l}{d_{l'}} = b_l \sum_{l' = 1}^l \frac{\underline{c}_{l'}}{d_{l'}}.
  \end{align*}

  Finally, when decreasing $\pi$ below $b_r$, we start packing the item $\overline{z}_s$.
  Note that no item after $\overline{z}_s$ is packed:
  The total size of all items in the order up to $\overline{z}_s$ is always at least $1$
  because the items $\underline{z}_s$ and $\overline{z}_s$ have a total size of $\pi_s + (1 - \pi_s) = 1$ already, for any $\pi$.
  When $\pi$ reaches $0$, the size $1 - \pi_s$ of the item $\overline{z}_s$ becomes $1$
  and all previous items vanish (they have size and cost $0$ now).
  This leads to the left end of the function $f_j$:
  The leftmost linear piece,
  corresponding to the item $\overline{z}_s$,
  has the range $\pi \in [0, b_r]$.
  Hence, the leftmost breakpoint is given by $\tilde{b}_{r+1} = 0 = b_{r + 1}$ and $f_j(b_{r + 1}) = \overline{c}_s$.
\end{proof}

Recall that the functions $f_j$ are a part of the description of the function $f$;
see \eqref{fpi}.
We can now derive that also the function $f$ is continuous and piecewise linear:

For every $j \in [m]$, the term $\min_{k \in T_j} C_k$ is a constant,
therefore $\min\{\min_{k \in T_j} C_k, f_j(\pi)\}$ arises from $f_j(\pi)$
by replacing its left part that is larger than this constant (if it exists) by a constant function.
Note that the resulting function is not convex anymore and a new breakpoint $b^\star_j$ might arise at the intersection of $f_j$ and the constant function
 (see Figure~\ref{fig:function-fj-b}).

In the next step, several functions of this type, one for each $j \in [m]$, are (pointwise) added.
This leads to a continuous piecewise linear function again.
Each breakpoint of one of the functions can lead to a breakpoint in their sum.

Finally,
the linear function $\Gamma \pi$ is added,
which also preserves continuity and the piecewise linear structure
(see Figure~\ref{fig:function-fj-c}).
Thus, we have:

\begin{lemma} \label{lem_f}
  The function $f$ is continuous and piecewise linear. Its $\bigO(n)$ breakpoints are contained in the set $\bigcup_{j \in [m]} (B_j \cup \set{b^\star_j})$.
\end{lemma}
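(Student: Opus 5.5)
The plan is to build $f$ from the $f_j$ in three steps, tracking breakpoints at each step. Throughout, Lemma~\ref{lem_fj} is the input: each $f_j$ is continuous, convex and piecewise linear on $\Rge$, with all breakpoints in $B_j$ and $\lvert B_j\rvert = r^{(j)}+1 \le n_j+1$. Write $\kappa_j := \min_{k\in T_j} C_k$ and $g_j(\pi) := \min\{\kappa_j, f_j(\pi)\}$, so that $f(\pi) = \Gamma\pi + \sum_{j} g_j(\pi)$ by \eqref{fpi}.

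\textbf{Step 1 (truncation).} Each $g_j$ is a pointwise minimum of two continuous functions, hence continuous. Because $f_j$ is convex and nonincreasing (its slopes $(\underline c_{l}-\underline c_{l+1})/d_l\le 0$ and the final piece going down from $\overline c_s$), the set $\{\pi\ge 0 : f_j(\pi) > \kappa_j\}$ is an interval $[0,b^\star_j)$, possibly empty. Set $b^\star_j$ to be its right endpoint, or ignore it when the set is empty or equals all of $\Rge$.

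On $[0,b^\star_j]$, $g_j$ is constant. On $[b^\star_j,\infty)$, $g_j$ equals $f_j$. So $g_j$ is piecewise linear, and its breakpoints are contained in $B_j\cup\{b^\star_j\}$.

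To be safe about monotonicity, I would also give a fallback argument that does not need it. A minimum of a piecewise linear function and a constant can only change slope at breakpoints of $f_j$ or at points where $f_j=\kappa_j$. By convexity, $f_j-\kappa_j$ has at most two isolated zeros, unless it vanishes on a whole segment, in which case no new breakpoint appears. Monotonicity reduces this to a single point $b^\star_j$, which matches the statement. I expect this step to be the only delicate point, and I would justify nonincreasing-ness directly from \eqref{fjpi}: increasing $\pi$ only relaxes the constraints $\pi+\rho_i\ge d_i y_i$, so $f_j$ cannot increase.

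\textbf{Step 2 (summation).} A finite sum of continuous piecewise linear functions is continuous and piecewise linear. On any interval containing no breakpoint of any summand, every summand is affine, so the sum is affine there. Hence the breakpoints of $\sum_j g_j$ lie in $\bigcup_j (B_j\cup\{b^\star_j\})$.

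\textbf{Step 3 (linear term and count).} Adding the affine function $\Gamma\pi$ preserves continuity and introduces no new breakpoints. The size of the candidate set is
\[
\sum_j \bigl(\lvert B_j\rvert + 1\bigr) \le \sum_j (n_j + 2) \le 3n,
\]
since $m\le n$. This gives the claimed $\bigO(n)$ breakpoints contained in $\bigcup_{j \in [m]} (B_j \cup \set{b^\star_j})$.
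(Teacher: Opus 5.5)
Your proof is correct and follows the same route as the paper. You truncate each $f_j$ at the constant $\min_{k\in T_j} C_k$, which adds at most one breakpoint $b^\star_j$, then sum over $j$ and add $\Gamma\pi$; your explicit argument that $f_j$ is nonincreasing (raising $\pi$ only relaxes the constraints of \eqref{fjpi}) and the count $n+2m\le 3n$ make precise what the paper leaves implicit. One small slip in your parenthetical: on $[b_{l+1}^{(j)}, b_l^{(j)}]$ the slope of $f_j$ is $\sum_{l'=1}^{l}(\underline c_{l'}-\underline c_{l+1})/d^{(j)}_{l'}$, not $(\underline c_l-\underline c_{l+1})/d_l$, but it is still nonpositive, so nothing in your argument breaks.
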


In order to solve the problem (2RS-C),
we need to determine the minimum of the function $f$ over all $\pi \ge 0$.
This can be done by iterating through all its breakpoints.

\begin{figure}
  \centering
  \begin{tabular}{ccc}
  \begin{subfigure}[t]{0.3\textwidth}
    \includegraphics[width=\textwidth, page=1]{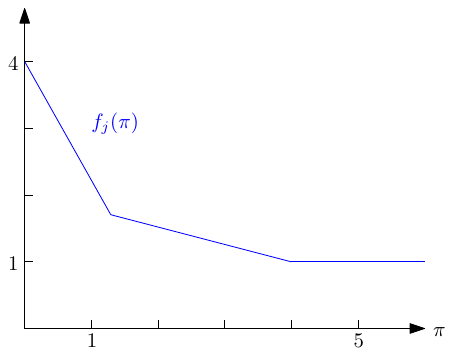}
    \caption{Function $f_j$ is piecewise linear and convex.}
    \label{fig:function-fj-a}
  \end{subfigure}
  &
  \begin{subfigure}[t]{0.3\textwidth}
    \includegraphics[width=\textwidth, page=2]{function-fj}
    \caption{Taking the pointwise minimum of $f_j$ and a constant introduces at most one additional breakpoint.}
    \label{fig:function-fj-b}
  \end{subfigure}
  &
  \begin{subfigure}[t]{0.3\textwidth}
    \includegraphics[width=\textwidth, page=3]{function-fj}
    \caption{Function $f$ is piecewise linear as a sum of the piecewise linear functions from (b) for all $j \in [m]$ and the linear function~$\Gamma \pi$.}
    \label{fig:function-fj-c}
  \end{subfigure}
  \end{tabular}
\caption{Illustration of the piecewise linear functions $f_j$ and $f$.}
\label{fig:function-fj}
\end{figure}

\begin{theorem}\label{th:reprcontinP}
  The problem (2RS-C) can be solved in $\bigO(n \log n)$ time.
\end{theorem}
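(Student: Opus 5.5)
By the reformulation in Section~\ref{sec_reformulation}, the optimal value of (2RS-C) equals $\min_{\pi \ge 0} f(\pi)$ with $f$ as in \eqref{fpi}. By Lemma~\ref{lem_f}, $f$ is continuous and piecewise linear with all breakpoints in $\bigcup_{j}(B_j \cup \set{b^\star_j})$. For $\pi \ge \max_j d^{(j)}_1$, every $f_j$ is constant by Lemma~\ref{lem_fj}, so $f$ grows linearly with slope $\Gamma \ge 0$. Hence the minimum over $\pi \ge 0$ is attained at a breakpoint, namely at some point of $\bigcup_j (B_j \cup \set{b^\star_j})$, which contains $0$. The algorithm therefore (i) computes all candidate breakpoints, (ii) evaluates $f$ at all of them, and (iii) returns the best one. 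An optimal solution $(x,y,\pi,\rho)$ is then recovered from the greedy solution of \eqref{fjpi''} at that $\pi$.

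\textbf{Step (i): computing the breakpoints.} For each $j$, sort $T_j$ by $\underline c_i$ in $\bigO(n_j \log n_j)$ time. Let $\overline c_s$ be the minimum of $\overline c_i$ over $T_j$ taken over prefixes: $r$ is the largest index with $\underline c_r \le \min_{i \le r}\overline c_i$, and $s$ is the argmin of $\overline c_i$ among the first $r$ indices. Both are found in one linear scan. Prefix sums of $1/d_l$ and $\underline c_l/d_l$ then give every $b_l^{(j)}$ and every value $f_j(b_l^{(j)})$ in $\bigO(n_j)$ time, using Lemma~\ref{lem_fj}. Since $f_j$ is convex and nonincreasing (it equals $\underline c_1$ on the right and is largest, $\overline c_s$, at $0$), the intersection $b^\star_j$ with the constant $\min_{k\in T_j} C_k$ lies on a single linear piece. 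That piece is found by a scan, and $b^\star_j$ is computed by linear interpolation.

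\textbf{Step (ii): evaluating $f$ at all breakpoints.} This is the main obstacle, since naive evaluation costs $\bigO(n)$ per point and $\bigO(n^2)$ overall. The plan is a sweep. Merge all $\bigO(n)$ candidate points into one sorted list in $\bigO(n \log n)$ time. Then represent $f$ between consecutive candidates as $\Gamma\pi + \sum_j g_j(\pi)$, where $g_j := \min\{\min_{k\in T_j} C_k, f_j\}$ and each $g_j$ is affine on that interval, and maintain the aggregate slope and intercept $\sum_j(\text{slope}_j, \text{intercept}_j)$. Sweep from right to left, starting at $\pi = \max$ breakpoint, where every $g_j$ is constant. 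At each candidate point belonging to group $j$, subtract the old affine piece of $g_j$ and add the new one; each update is $\bigO(1)$ using the precomputed breakpoint values. The value of $f$ at each point is read off from the aggregate in $\bigO(1)$ time. The total work is $\bigO(n)$ after sorting, which gives the claimed $\bigO(n\log n)$ bound.

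\textbf{Step (iii): correctness and degenerate cases.} Correctness follows from Lemma~\ref{lem_f} together with continuity and piecewise linearity. The remaining work is bookkeeping: ties, the case where $g_j$ equals the constant everywhere or nowhere, and whether the constant piece of $g_j$ is on the left or the right, which is determined by monotonicity of $f_j$. The assumption $d_i \neq 0$ is handled as indicated in the paper.
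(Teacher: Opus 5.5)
Your proposal is correct and follows essentially the same route as the paper: you sort within each group and use prefix sums to obtain $B_j$ and the values $f_j(b_l^{(j)})$, scan to locate $b^\star_j$, sort all candidate points globally in $\bigO(n \log n)$, and run a linear sweep that maintains the aggregate slope of $f$ so that each evaluation costs $\bigO(1)$. The differences are cosmetic: you sweep from right to left and track slope and intercept, whereas the paper starts from $f(0)$, and you state explicitly that the slope $\Gamma \ge 0$ beyond all breakpoints guarantees the minimum is attained at a candidate point.
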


\begin{proof}
  For each $j \in [m]$, we can use a sorting algorithm to compute the order $O_Z^{(j)}$, and therefore compute the set~$B_j$ of breakpoints of $f_j$ in $\bigO(n_j \log n_j)$ time.
  Note that, since $B_j$ is defined in terms of partial sums, we can compute the whole set $B_j$ in only $\bigO(n_j)$ time once the order is given.
  For each breakpoint $b \in B_j$, we compute and store also the corresponding value~$f_j(b)$.
  By Lemma~\ref{lem_fj}, also these values can be expressed using partial sums. Hence, they can be computed in a total time of $\bigO(n_j)$ as well.
  
  By iterating over the breakpoints $B_j$ in $\bigO(n_j)$ time, we can find the linear piece that intersects with the constant function $\min_{k \in T_j} C_k$ and compute the additional breakpoint $b^\star_j$ (or decide that $b^\star_j$ does not exist).
  This defines the function $g_j(\pi) := \min\{\min_{k \in T_j} C_k, f_j(\pi)\}$ with breakpoints $B_j' := \{0, b^\star_j\} \cup \{b \in B_j : b > b^\star_j\}$.
  We then obtain the set $B := \bigcup_{j \in [m]} B_j'$ of breakpoints of $f$ and sort it in $\bigO(n \log n)$ time.
  
  Finally, we need to evaluate $f(\pi)$ for all $\pi \in B$ in order to find the minimum $\pi^\star$.
  For this, we first compute the value $f(0) = \sum_{j=1}^m g_j(0)$ and then iterate over all $\bigO(n)$ breakpoints in~$B$ in increasing order, determining the value of $f$ at each of them. In order to perform each evaluation in constant time, we keep track of the current slope of $f$. The slope of a linear piece of~$f$ is the sum of $\Gamma$ and the sum of the slopes of the linear pieces of all $g_j$ in the corresponding range. Each slope of a linear piece of $g_j$ can be obtained from the two adjacent breakpoints and their function values in $\bigO(1)$ time. In each iteration corresponding to a breakpoint in $B$, we adjust the current slope of $f$ according to the slope change of the function $g_j$ whose breakpoint is currently considered.
  This enables a total running time of $\bigO(n)$ for the whole loop.

  In order to compute the corresponding first-stage selection $x \in \{0, 1\}^n$ that attains the same minimum for (2RS-C),
  we need to keep track of the set $\{j \in [m] : b^\star_j \ge \pi\}$ during the search for~$\pi^\star$.
  This set for $\pi = \pi^\star$ describes for which $T_j$ the first-stage solution $x$ should select a cheapest item (with costs $\min_{k \in T_j} C_k$), while all other $x$ variables should be set to $0$.
  Hence, $x$~can be determined in $\bigO(n)$ in the end of the algorithm.
\end{proof}

\textbf{Discussion of the case $d_i = 0$.} Finally, we talk about the case where our simplifying assumption does not hold and we have $d_i = 0$ for at least one $i \in [n]$. 
In this case, $\pi/d_i$ is undefined. However, it can be shown that all our arguments are still valid. To this end, we define
\[
\pi_i := \min\set{1, \pi/d_i} \text{ if $d_i \neq 0$, and }\pi_i := 1 \text{ if $d_i = 0$}.
\]
Then it can be shown that Lemma~\ref{lem_fjpi_fjpi'} is still true. Informally, this is due to the fact that $\pi \geq 0$ and so $\pi/d_i$ can be interpreted as positive infinity.
Lemma~\ref{lem_fjpi'_fjpi''} remains true since it describes only a linear scaling.
Lemma~\ref{lem_fj} remains true for the following reason. 
Consider $j \in [m]$ such that for some $i \in T_j$ we have $d_i = 0$. 
Observe that $\ratio(\underline z_i) = \underline c_i =\overline c_i = \ratio(\overline z_i)$ are well-defined, even if $d_i = 0$. Hence, the order $O_Z^{(j)}$ is also well-defined and 
we may assume that $\underline z_i, \overline z_i$ are consecutive items in~$O_Z^{(j)}$.
Analogously to before, without loss of generality, assume the indexing of the items in $T_j$ to be such that $\underline c_1 \leq \underline c_2 \leq \dots \leq \underline c_r \leq \overline c_{s}$ where $\overline c_{s}$ is the first occurrence of some item $\overline z_{s}$ in the order ($s \leq r$).
Then we have $r \leq i$ since the items $\underline z_i, \overline z_i$ are consecutive. We can then consider the definition of the breakpoints $b_l^{(j)}$. For all $l < r$, there is no problem since we do not divide by 0. 
For the last case of $l = r$, we can encounter a division by 0. Then it can be shown that this value can be treated as a breakpoint at ``$0 = \infty^{-1}$'', which is already included in the set $B_j$ (as $b_{r + 1} = 0$).
Lemma~\ref{lem_f} and Theorem~\ref{th:reprcontinP} do not depend on $d_i \neq 0$. In total, we have shown how to solve (2RS-C) in $\bigO(n \log n)$ time, even if $d_i = 0$ for some $i \in [n]$.

\section{Selection with Continuous Budgeted Sets}
\label{sec:hardnesscont}

In this section, we consider the problem (2S-C),
i.e., problem~\eqref{2MRS-C} with $m = 1$,
and prove its NP-hardness.
Let us first recall the problem setting. Given are a set of $n$ items, denoted by $[n] = \{1, \dots, n\}$,
a number $p \in [n]$,
first-stage costs $C \in \Rge^n$,
and an uncertainty set~$U \subseteq \Rge^n$ of possible second-stage costs,
given as
\[
  U = \{c \in \Rge^n : \exists \delta \in [0, 1]^n \text{ s.t. } c_i = \underline{c}_i + d_i \delta_i ~\forall i \in [n], \sum_{i = 1}^n \delta_i \le \Gamma\}
\]
with $\underline{c}, d \in \Rge^n$ and $\Gamma \in \Rge$.
As before, let $\overline{c} = \underline{c} + d$.
The task is to select $p$ items from $[n]$, either in the first stage, where item $i$ has a cost of $C_i$,
or in the second stage,
where item $i$ has a cost of $c_i$,
after the (worst-case) second-stage costs $c \in U$ have been realized.
As a special case of the formulation~\eqref{2MRS-C} derived in Section~\ref{sec_reformulation},
the problem can be written as:
\begin{align*} \label{2S-C} \tag{2S-C}
  \min~ & \sum_{i = 1}^n C_i x_i + \sum_{i = 1}^n \underline{c}_i y_i + \Gamma \pi + \sum_{i = 1}^n \rho_i \\
  \st~ & \sum_{i = 1}^n (x_i + y_i) = p \\
        & x_i + y_i \le 1 & \forall i \in [n] \\
        & \pi + \rho_i \ge d_i y_i & \forall i \in [n] \\
        & x \in \{0, 1\}^n, y \in [0, 1]^n, \pi \in \Rge, \rho \in \Rge^n
\end{align*}

In order to understand this problem, we will study it for fixed $x$ and fixed $\pi$.
For all first-stage solutions $x \in \{0, 1\}^n$ that are feasible for \eqref{2S-C}, i.e., with $\sum_{i = 1}^n x_i \le p$, we define the set $R_x := \{i \in [n] : x_i = 0\}$ of items not selected in the first stage and the remaining budget $p_x := p - \sum_{i = 1}^n x_i$.
Observe that, for an item $i \in [n]$ with $x_i = 1$, the constraint $x_i + y_i \le 1$ forces the corresponding variable $y_i$ to be $0$, and the corresponding variable $\rho_i$ is set to $0$ in any optimal solution as well. Therefore, when $x$ is fixed, we may omit these variables and focus on the indices in~$R_x$.
We now introduce a function $g$ that describes the optimal values of the remaining problem when $x$ and $\pi$ are fixed.
Note that the parts of the objective of \eqref{2S-C} that are constant for fixed $x$ and $\pi$, namely $\sum_{i = 1}^n C_i x_i + \Gamma \pi$, are omitted in this definition.
For all $x \in \{0, 1\}^n$ with $\sum_{i = 1}^n x_i \le p$ and all $\pi \in \Rge$, we set:
\begin{align*} \tag{$\triangle$} \label{gxpi}
  g(x,\pi) := \min~ & \sum_{i \in R_x} \underline{c}_i y_i + \sum_{i \in R_x} \rho_i \\
  \st~ & \sum_{i \in R_x} y_i = p_x \\
                    & \pi + \rho_i \ge d_i y_i \\
                    & y \in [0, 1]^{R_x}, \rho \in \Rge^n
\end{align*}
Then the following problem formulation is equivalent to problem \eqref{2S-C}:
\begin{align*} \tag{2S-C'} \label{2S-C'}
  \min~ & \sum_{i = 1}^n C_i x_i + \Gamma \pi + g(x, \pi)\\
  \st~ & \sum_{i = 1}^n x_i \le p \\
        & x \in \{0, 1\}^n, \pi \in \Rge
\end{align*}

Similarly to Section~\ref{sec_poly_alg}, we can rewrite \eqref{gxpi} as follows.
First observe that the optimal value for $\rho$ satisfies $\rho_i = \max\{0, d_i y_i - \pi\}$ for all $i \in [n]$.
Splitting each variable $y_i$ into separate variables $\underline{y}_i$ and $\overline{y}_i$,
representing the two cases where $d_i y_i - \pi \le 0$ and $d_i y_i - \pi > 0$,
results in the following formulation
(see also \eqref{fjpi'} and Lemma~\ref{lem_fjpi_fjpi'}):
\begin{align*}
  g(x, \pi) = \min~ & \sum_{i \in R_x} \underline{c}_i \underline{y}_i + \sum_{i \in R_x} \overline{c}_i \overline{y}_i \\
  \st~ & \sum_{i \in R_x} (\underline{y}_i + \overline{y}_i) = p_x \\
                    & \underline{y}_i \in [0, \pi_i] & \forall i \in R_x \\
                    & \overline{y}_i \in [0, 1 - \pi_i] & \forall i \in R_x
\end{align*}
Here, we again set $\pi_i := \min\{1, \pi/d_i\}$ for all $i \in [n]$. In the construction that we use later, we ensure that $d_i>0$ for all $i \in [n]$, which means that the term is always well-defined.

In the next step, we scale the ranges of the variables $\underline{y}$ and $\overline{y}$,
like in \eqref{fjpi''} and Lemma~\ref{lem_fjpi'_fjpi''},
and get a continuous min-knapsack problem:
\begin{align*} \tag{$\triangle$'} \label{CKP}
  g(x, \pi) = \min~ & \sum_{i \in R_x} \underline{c}_i \pi_i \underline{z}_i + \sum_{i \in R_x} \overline{c}_i (1 - \pi_i) \overline{z}_i \\
  \st~ & \sum_{i \in R_x} (\pi_i \underline{z}_i + (1 - \pi_i) \overline{z}_i) = p_x \\
        & \underline{z}_i, \overline{z}_i \in [0, 1] & \forall i \in R_x
\end{align*}

Our aim is to prove that the problem~\eqref{2S-C'} is NP-hard, both in general and for a fixed value of~$\pi$.
We will use the special structure of the continuous min-knapsack problem \eqref{CKP} in order to understand how its optimal solution changes when selecting different first-stage solutions $x \in \{0, 1\}^n$ and different values~$\pi \in \Rge$.
Like in Section~\ref{sec_poly_alg}, an optimal solution to such a continuous min-knapsack problem can be found by sorting the items by their ratios of cost and size, 
which are again given by $\ratio(\underline z_i) := \underline{c}_i \pi_i / \pi_i = \underline{c}_i$ and $\ratio(\overline z_i) := \overline{c}_i (1 - \pi_i) / (1 - \pi_i) = \overline{c}_i$ here, 
and then selecting items in this order until the capacity $p_x$ is reached. 
Note that this order of all $2n$ items (corresponding to $\underline{z}_i$ and $\overline{z}_i$ for all $i \in [n]$) is independent of $x$ and of $\pi$. 
Different first-stage solutions $x$ lead to different subsets of these $2n$ items being considered (corresponding to $R_x$) and different capacities $p_x$. 
Different values of $\pi$ lead to different sizes $\pi_i$ and $1 - \pi_i$ of the items, and corresponding costs $\underline{c}_i \pi_i$ and $\overline{c}_i (1 - \pi_i)$.

The following result describes the behavior of the continuous min-knapsack problem~\eqref{CKP} for varying values of $\pi$,
i.e., the behavior of the function $g(x,\cdot) \colon \Rge \to \Rge$ in $\pi$.
It is a generalization of Lemma~\ref{lem_fj}.
Note that the only difference between the continuous min-knapsack problems~\eqref{fjpi''} and~\eqref{CKP} concerns the knapsack's capacity.

\begin{lemma} \label{lem_fj_larger_p}
  For every $x \in \{0, 1\}^n$ with $\sum_{i = 1}^n x_i \le p$, the function $g(x,\cdot)$ is continuous, convex, and piecewise linear.
\end{lemma}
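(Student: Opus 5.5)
The plan is to reuse the general parametric-LP argument already given at the start of the proof of Lemma~\ref{lem_fj}, now applied to the linear program \eqref{gxpi}. Fix a feasible first-stage solution $x \in \{0,1\}^n$, i.e., one with $\sum_{i=1}^n x_i \le p$. Then $R_x$ and $p_x$ are fixed, and $g(x,\pi)$ is the optimal value of the LP \eqref{gxpi}. In this LP the parameter $\pi$ appears only in the right-hand side: the constraints $\pi + \rho_i \ge d_i y_i$ can be rewritten as $d_i y_i - \rho_i \le \pi$. The objective and the constraint matrix do not depend on $\pi$, and the right-hand side vector is an affine (indeed linear) function of $\pi$.

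First, I would check that the LP is feasible and bounded for every $\pi \ge 0$.
\begin{itemize}
\item \emph{Feasibility.} Since $p_x = p - \sum_i x_i$, we have $|R_x| = n - \sum_i x_i \ge p_x$. So we can set $y_i = 1$ on any $p_x$ indices of $R_x$ and $y_i = 0$ elsewhere, with $\rho_i = \max\{0, d_i y_i - \pi\}$.
\item \emph{Boundedness.} The objective is nonnegative, because $\underline{c} \ge 0$, $y \ge 0$ and $\rho \ge 0$.
\end{itemize}
Hence $g(x,\pi)$ is finite for all $\pi \in \Rge$.

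Next, I would apply the standard result (e.g., \cite[Section~5.2]{bertsimas1997}). By LP duality, $g(x,\pi) = \max_{(u,w) \in P} \big( u\,p_x + \pi \sum_{i} w_i \big)$ plus possibly further constant terms, where $P$ is the dual feasible polyhedron, which does not depend on $\pi$. Because the primal is feasible and bounded for all $\pi \ge 0$, the dual optimum is attained at one of the finitely many vertices of $P$. On $\Rge$, $g(x,\cdot)$ is therefore the pointwise maximum of finitely many affine functions of $\pi$. A finite maximum of affine functions is convex, piecewise linear, and continuous on $\Rge$, which proves the claim.

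The only point needing care is that the dual polyhedron $P$ has at least one vertex. If it had none, the maximum over finitely many vertices would not be available. To settle this, I would argue that it suffices to use the $\pi$-independent pointedness of $P$, or alternatively to observe that the primal is a minimization over a pointed polyhedron. Convexity can also be shown directly: a convex combination of optimal solutions for $\pi^1$ and $\pi^2$ is feasible for the corresponding combination of $\pi$ values, which gives convexity on the open interval. Piecewise linearity would then follow from the finiteness of the set of optimal bases, and continuity from convexity together with finiteness at the endpoint $\pi = 0$.

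Alternatively, I could mirror the explicit greedy analysis of Lemma~\ref{lem_fj} on \eqref{CKP}. That route is more laborious and is not necessary for this statement.
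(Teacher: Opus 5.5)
Your proposal is correct and takes the same approach as the paper. The paper's proof notes that \eqref{gxpi} is a bounded LP in which $\pi$ enters the right-hand side affinely, and then cites \cite[Section~5.2]{bertsimas1997}; your feasibility and boundedness checks and the dual ``maximum of finitely many affine functions'' argument spell out that citation, and your pointedness worry is harmless, since the dual polyhedron contains no line whenever $R_x \neq \emptyset$ and $g(x,\cdot) \equiv 0$ when $R_x = \emptyset$. Drop the backup remark that continuity follows from convexity plus finiteness at $\pi = 0$: a convex function on $\Rge$ can jump at that endpoint, whereas your main max-of-affine-functions argument already gives continuity on all of $\Rge$.
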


\begin{proof}
  Consider the definition of the function $g(x,\cdot)$ in \eqref{gxpi}.
  Analogously to the proof of Lemma~\ref{lem_fj}, this problem is a bounded linear program for each fixed~$\pi$, where $\pi$ appears in the right-hand side vector. Therefore, by general linear programming arguments (see, e.g., \cite[Section~5.2]{bertsimas1997}), $g(x,\cdot)$ is a piecewise linear and convex function in $\pi$.
\end{proof}

The function $g(x, \cdot)$ can be easily transformed into a function $h_x : \Rge \to \Rge$ that describes the objective values of \eqref{2S-C'} for the fixed $x$ and varying values of $\pi$.
Indeed, we only need to add the term $\sum_{i = 1}^n C_i x_i$, which is constant for fixed $x$, and the linear function $\Gamma \pi$, i.e., we set, for all $\pi \in \Rge$:
\begin{align*}
  h_x(\pi) := \sum_{i = 1}^n C_i x_i + \Gamma \pi + g(x, \pi)
\end{align*}
Clearly, the structural properties of the function $g(x, \cdot)$ are preserved by these operations.
Thus, we have:

\begin{lemma} \label{lem_hx}
  For every $x \in \{0, 1\}^n$ with $\sum_{i = 1}^n x_i \le p$, the function $h_x$ is continuous, convex, and piecewise linear.
\end{lemma}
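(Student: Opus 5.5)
The plan is to obtain Lemma~\ref{lem_hx} directly from Lemma~\ref{lem_fj_larger_p} by noting that $h_x$ differs from $g(x,\cdot)$ only by an affine function of $\pi$. Fix a feasible $x$ with $\sum_{i=1}^n x_i \le p$. Then $h_x(\pi) = a_x + \Gamma \pi + g(x,\pi)$, where $a_x := \sum_{i=1}^n C_i x_i$ is a constant independent of $\pi$. So $h_x = g(x,\cdot) + \ell$ with the affine function $\ell(\pi) := a_x + \Gamma\pi$ on $\Rge$.

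Each of the three properties is then checked separately, and each follows because it is preserved under addition of an affine function.

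\textbf{Continuity.} A sum of continuous functions is continuous, and $g(x,\cdot)$ is continuous by Lemma~\ref{lem_fj_larger_p}.

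\textbf{Convexity.} $\ell$ is affine, hence convex, and a sum of convex functions is convex. Alternatively, the convexity inequality for $g(x,\cdot)$ transfers verbatim, since $\ell(\lambda\pi+(1-\lambda)\pi') = \lambda\ell(\pi)+(1-\lambda)\ell(\pi')$.

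\textbf{Piecewise linearity.} On every interval where $g(x,\cdot)$ is linear, say $g(x,\pi)=\alpha\pi+\beta$, we get $h_x(\pi) = (\alpha+\Gamma)\pi + (\beta + a_x)$, which is again linear. Thus $h_x$ is piecewise linear, with the same breakpoints as $g(x,\cdot)$ and all slopes shifted by $\Gamma$.

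There is no real obstacle. The only point to state explicitly is that $g(x,\cdot)$ is finite, i.e.\ that the program \eqref{gxpi} is feasible and bounded for every $\pi\ge 0$, so that $h_x$ is a real-valued function on all of $\Rge$. Feasibility holds because $|R_x| = n - \sum_i x_i \ge p_x$, so $y$ can be chosen with $\sum_{i\in R_x} y_i = p_x$, with $\rho$ chosen large enough. Boundedness holds because the objective is nonnegative. This finiteness was already used in the proof of Lemma~\ref{lem_fj_larger_p}, so it can simply be cited from there.
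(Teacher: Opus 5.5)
Your proposal is correct and takes the same approach as the paper: $h_x$ is $g(x,\cdot)$ plus the constant $\sum_{i=1}^n C_i x_i$ and the linear term $\Gamma\pi$, and these operations preserve continuity, convexity, and piecewise linearity (Lemma~\ref{lem_fj_larger_p}). Your extra check that $g(x,\cdot)$ is finite on all of $\Rge$ is a correct addition that the paper leaves implicit.
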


In the proof below, we will make use of these properties when identifying the minimum of such a function $h_x$.
More specifically, we will show that the minimum is attained for some specific~$\pi^\star$ by analyzing the slopes of the linear pieces left and right of $\pi^\star$. If the slope on the left is negative and the slope on the right is positive, then we have found the unique minimum due to the convexity.
Identifying the minima of the functions $h_x$ is important because the optimal value of \eqref{2S-C'} corresponds to the minimum, over all feasible $x$, of these minima.

Based on these general structural observations, we now present our hardness result:

\begin{theorem} \label{thm_hardness}
  The problem (2S-C) is NP-hard.
\end{theorem}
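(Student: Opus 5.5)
A natural route is a reduction from a partition-type problem, most likely \textsc{Subset Sum} or \textsc{Partition} with cardinality constraint (choose exactly $k$ of $2k$ numbers summing to half the total). The paper hints that the reduction should work "for a fixed value of $\pi$", so I would aim for a construction in which, for every relevant first-stage choice $x$, the minimizer of $h_x$ lies at a designated value $\pi^\star$, e.g.\ $\pi^\star = 1$. At that value $\pi_i = \min\{1, 1/d_i\}$. Then by \eqref{CKP} the second-stage value is a continuous knapsack whose behaviour depends on which items remain in $R_x$ and on the residual capacity $p_x$.

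Concretely, for each number $a_i$ of the Partition instance I would create an item $i$ with $d_i$ chosen so that the size $\pi_i = 1/d_i$ of its cheap part $\underline z_i$ encodes $a_i$. I would give these items identical $\underline c_i$ (say $0$) and a large common $\overline c_i$. The first-stage costs $C_i$ would also depend on $a_i$, so that buying item $i$ in the first stage trades off against the amount of cheap capacity it would otherwise contribute. I would add a few auxiliary items with $d = 0$ or huge $d$ to serve as a "filler" with intermediate ratio, so that any shortfall of cheap capacity must be covered at a linear penalty. The total cost for fixed $x$ at $\pi^\star$ then becomes something like $\sum_{i\in x} C_i + \Gamma\pi^\star + (\text{cost of filling } p_x \text{ with cheap parts of } R_x \text{, rest at a penalty rate})$. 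This is a piecewise linear function of $S(x)=\sum_{i \in R_x} a_i$ with a kink exactly at the target value. Choosing $C_i$ proportional to $a_i$ with the right coefficient makes the objective a $V$-shaped function of $S(x)$, minimized iff $S(x)$ equals the target. This yields a threshold $K$ such that the optimum is at most $K$ iff a partition exists.

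The main obstacle is controlling $\pi$. The optimizer may choose any $\pi \ge 0$, and $h_x$ for a "bad" $x$ might be minimized at a different $\pi$, giving a value below $K$. I would handle this by using Lemma~\ref{lem_hx}: each $h_x$ is convex and piecewise linear, so it suffices to show that, for every $x$ with the correct cardinality, the left slope at $\pi^\star$ is negative and the right slope is positive. The slope of $g(x,\cdot)$ around $\pi^\star$ can be read from the parametric knapsack, exactly as in the breakpoint analysis of Lemma~\ref{lem_fj}. The contributions are $-\underline{c}$-differences divided by $d_i$ for fully packed cheap items, plus the fractional item. I would choose $\Gamma$ strictly between these left and right slopes for all $x$, ensuring all $d_i$ lie in a narrow range, e.g.\ $a_i$ small perturbations of a common base value, so that the slopes are uniformly separated. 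I would also force the cardinality of $x$ (exactly $k$ first-stage picks) by adding a large constant offset $M$ to all $C_i$ together with an adequate $p$. Then I would verify both directions: a partition gives cost $K$, and cost at most $K$ forces $\pi=\pi^\star$ and $S(x)$ equal to the target. This verification, with all the slope inequalities holding simultaneously, is the delicate step. Encoding the numbers into $d_i$ via small perturbations keeps the computation polynomial, giving weak NP-hardness.
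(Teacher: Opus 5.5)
\textbf{There is a genuine gap: the step that controls $\pi$ does not work as planned.}

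Your reduction rests on the claim that, for every relevant $x$, $h_x$ has a kink at a common value $\pi^\star$, with negative left slope and positive right slope, so that $\Gamma$ can be sandwiched uniformly. In the construction you describe this is false, not merely delicate. Write $\underline{c}$ for the common lower cost, $P$ for the filler ratio, and $w_x$ for the sum of $1/d_i$ over the encoding items in $R_x$.

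\emph{Overfull $x$.} Suppose the cheap capacity $\pi^\star w_x$ strictly exceeds $p_x$. Then every packed part has the same ratio $\underline{c}$, so $g(x,\cdot)$ is constant near $\pi^\star$. Hence $h_x$ has slope $+\Gamma$ on \emph{both} sides, and the optimizer lowers $\pi$ to $\pi_x:=p_x/w_x$, where the cheap parts exactly fill $p_x$.

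\emph{Strict shortfall.} Nothing in your instance has a breakpoint at $\pi^\star$. The $d=0$ filler has constant size, and $\pi/d_i$ is linear near $\pi^\star$, since you need $d_i>\pi^\star$ for $1/d_i$ to encode $a_i$ at all. So $h_x$ is affine there with slope $\Gamma-(P-\underline{c})w_x$, and its minimum again moves to $\pi_x$.

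Only exact-fill $x$ have a kink at $\pi^\star$. In your regime one gets $\min_\pi h_x=\sum_i C_ix_i+\underline{c}\,p_x+\Gamma p_x/w_x$. The V-shape you computed at fixed $\pi^\star$ is therefore flattened into a smooth convex function of the encoded sum (for fixed $\sum_i x_i$). Whether the target still wins then depends on tuning $C_i$ exactly against $\Gamma$, and even then the separation is only quadratic in the deviation. That is a different argument from the one you sketch. Separately, adding a large constant to all $C_i$ pushes $\sum_i x_i$ down rather than pinning it to $k$, so your cardinality control is also unsupported.

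\textbf{How the paper avoids this.} It does not seek a common minimizer. It builds a genuine breakpoint at $\pi=M$ by adding $m-A'+2=\Gamma+1$ items with $\underline{c}_i=0$ and $d_i=M$. Their sizes $\min\{1,\pi/M\}$ saturate exactly at $M$, so they steepen the left slope of $g$ by $\Gamma+1$ but leave the right slope unchanged. A filler item with $\underline{c}=d=M$ is also added. The knapsack data are encoded via $d_i=M/(1-a_i')$, $\underline{c}_i=v_i/(1-a_i')$ and $C_i=Ma_i'$. With these choices, on the shortfall side the first-stage cost exactly cancels the filler cost. This yields $\argmin h_x=\{M\}$ with value $(\Gamma+1)M+V-\sum_i v_ix_i$, but only for $x$ encoding feasible knapsack solutions.

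For the remaining $x$ (overfull capacity), Lemma~\ref{lem_hardness_infeasible} explicitly locates the shifted minimizer $\pi^\star=Mp_x/(p_x+\sum_i a_i'x_i-1)<M$. It shows $h_x(\pi^\star)=(\Gamma+1)M+V+\varepsilon$ with $\varepsilon\ge M(\sum_i a_i'x_i-1)\delta-V>0$, using $\sum_i a_i'x_i-1\ge 1/b$, $\delta>1/(3m)$ and $M=3mbV$. That lemma, which bounds $\min_\pi h_x$ for solutions whose optimal $\pi$ escapes the designated value, is exactly what your plan is missing. The fixed-$\pi$ statement then follows from $h_x(M)\ge\min_\pi h_x$, with no uniform-minimizer property needed.
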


In order to prove Theorem~\ref{thm_hardness},
we give a reduction from the NP-hard (binary) knapsack problem (see~\cite{karp1972})
to the equivalent reformulation~\eqref{2S-C'} of the problem (2S-C).
Let an instance $a, v \in \N^m, b \in \N$ of the knapsack problem be given.
The goal is to select a subset $S \subseteq [m]$ of the items such that their total size fits in the knapsack, i.e., $\sum_{i \in S} a_i \le b$, and their total value $\sum_{i \in S} v_i$ is maximized.
We can assume without loss of generality that $a_i \le b - 1$ for all $i \in [m]$.
Moreover, we assume that $\sum_{i = 1}^m a_i$ is divisible by $b$ (which is without loss of generality, as we can always add one dummy item with value $0$ for this purpose).

We scale the sizes of the items and of the knapsack by the factor $1/b$ and define $a_i' := a_i/b$ for all $i \in [m]$. Then the original constraint $\sum_{i \in S} a_i \le b$ is equivalent to $\sum_{i \in S} a_i' \le 1$.
Our assumptions imply that $a_i' \le 1 - 1/b$ for all $i \in [m]$ and that $A' := \sum_{i = 1}^m a_i' < m$ is an integer.
Moreover, set $V := \sum_{i = 1}^m v_i$.

We now define an instance of the problem~\eqref{2S-C'} as follows.
It consists of $n := 2 m - A' + 3$ items: one for each knapsack item, one additional ``expensive'' item, and $m - A' + 2$ additional ``cheap'' items.
Let $M := 3 m b V$ be a large constant.
The items' costs are defined by
\begin{align*}
  & C_i := M a_i', && \underline{c}_i := v_i/(1 - a_i'), && d_i := M/(1 - a_i'), && \text{for } i \in \{1, \dots, m\}, \\
  & C_i := (m + 3) M, && \underline{c}_i := M, && d_i := M, && \text{for } i = m + 1, \\
  & C_i := (m + 3) M, && \underline{c}_i := 0, && d_i := M, && \text{for } i \in \{m + 2, \dots, n\}.
\end{align*}
Finally, set the number of items to select as $p := 2 m - 2 A' + 3$
and the uncertainty budget as $\Gamma := m - A' + 1$.

To ease the following discussion, we illustrate this construction with some specific values.

\begin{example}
We consider a knapsack instance with $m=4$ items, for which the item sizes and values are given in Table~\ref{tab:ex1}. We use $b=7$. Additionally, we show all possible solution candidates $S^1,\dots,S^{16}$ for this knapsack instance, along with their sizes and values. Only some of these candidates are indeed feasible, which is marked in the column ``feasible?''.

\begin{table}
  \begin{center}
    \begin{tabular}{rrrrrrrrr}
      \toprule
      &\multicolumn{4}{c}{item $i$} \\
      \cmidrule{2-5}
      & 1 & 2 & 3 & 4 & $\sum_{i\in S} a_i$ & $\sum_{i\in S} v_i$ & feasible? & robust value \\
      \midrule
      $a_i$ & 2 & 3 & 4 & 5 & & & \\
      $a'_i$ & $2/7$ & $3/7$ & $4/7$ & $5/7$ & & & \\
      $v_i$ & 3 & 6 & 7 & 9 & & & \\
      \midrule
      $S^1$  & 0 & 0 & 0 & 0 & 0 & 0 & \checkmark & 8425.00 \\
      $S^2$  & 0 & 0 & 0 & 1 & 5 & 9 & \checkmark & 8416.00 \\
      $S^3$  & 0 & 0 & 1 & 0 & 4 & 7 & \checkmark & 8418.00 \\
      $S^4$  & 0 & 0 & 1 & 1 & 9 & 16 & & 8667.97 \\
      $S^5$  & 0 & 1 & 0 & 0 & 3 & 6 & \checkmark & 8419.00 \\
      $S^6$  & 0 & 1 & 0 & 1 & 8 & 15 & & 8534.72 \\
      $S^7$  & 0 & 1 & 1 & 0 & 7 & 13 & \checkmark & 8412.00 \\
      $S^8$  & 0 & 1 & 1 & 1 & 12 & 22 & & 8948.00 \\
      $S^9$  & 1 & 0 & 0 & 0 & 2 & 3 & \checkmark & 8422.00 \\
      $S^{10}$  & 1 & 0 & 0 & 1 & 7 & 12 & \checkmark & 8413.00\\
      $S^{11}$  & 1 & 0 & 1 & 0 & 6 & 10 & \checkmark & 8415.00\\
      $S^{12}$  & 1 & 0 & 1 & 1 & 11 & 19 & & 8817.75\\
      $S^{13}$  & 1 & 1 & 0 & 0 & 5 & 9 & \checkmark & 8416.00\\
      $S^{14}$  & 1 & 1 & 0 & 1 & 10 & 18 & & 8696.65\\
      $S^{15}$  & 1 & 1 & 1 & 0  & 9 & 16 & & 8588.40\\
      $S^{16}$  & 1 & 1 & 1 & 1 & 14 &  25 & & 8925.00 \\
      \bottomrule
    \end{tabular}
  \end{center}
  \caption{Example knapsack instance and solution candidates.}\label{tab:ex1}
\end{table}

In Table~\ref{tab:ex2}, we show the two-stage selection instance that is constructed in our reduction. By our definitions, we obtain $A'=2$, $V=25$, $n=9$, $p=7$, $\Gamma=3$, and $M=2100$.

\begin{table}
  \begin{center}
    \begin{tabular}{rrrrrrrrrrrr}
      \toprule
      &\multicolumn{4}{c}{knapsack items}&& \multicolumn{1}{c}{expensive item} && \multicolumn{4}{c}{cheap items} \\
      \cmidrule{2-5}
      \cmidrule{7-7}
      \cmidrule{9-12}
      $i$ & 1 & 2 & 3 & 4 & ~ & 5 & ~ & 6 & 7 & 8 & 9 \\
      \midrule
      $C_i$ & 600 & 900 & 1200 & 1500 & & 14700 & & 14700& 14700 & 14700 & 14700 \\
      $\underline{c}_i$ & $4.2$ & $10.5$ & $16.3$ & $31.5$ & & 2100 & & 0 & 0 & 0 & 0 \\
      $d_i$ & 2940 & 3675 & 4900 & 7350 & & 2100 & & 2100 & 2100 & 2100 & 2100 \\
      \bottomrule
    \end{tabular}
  \end{center}
  \caption{Example two-stage selection instance constructed in our reduction.}\label{tab:ex2}
\end{table}

Observe that selecting any of the last five items in the first stage immediately results in high costs. For each of the solution candidates that only pick first-stage items amongst the first four items, there is a corresponding knapsack solution candidate. In Table~\ref{tab:ex1}, the final column labeled ``robust value'' shows the objective value of this solution for the two-stage selection problem. We can observe that knapsack solution candidates that are infeasible always result in higher objective values than solution candidates that are feasible. Furthermore, we note that the best knapsack solution ($S^7$, with value 13) corresponds to the best two-stage selection solution (with value 8412). Indeed, we show below that, for feasible knapsack solutions $x$, the objective value of the corresponding solution of the constructed two-stage selection instance is $(\Gamma + 1) M + V - \sum_{i = 1}^m v_i x_i = 8425 - \sum_{i = 1}^m v_i x_i$.
\end{example}

With this example in mind, we now return to the proof of Theorem~\ref{thm_hardness}.
We know from Lemma~\ref{lem_hx} that, for every fixed feasible $x$, the objective value of~\eqref{2S-C'} is a continuous, convex, and piecewise linear function~$h_x$ in $\pi$. The optimal value of~\eqref{2S-C'} can be written as the minimum of the minima of all these functions, i.e., as
\begin{align*}
  \min_{x \in \{0, 1\}^n : \sum_{i = 1}^n x_i \le p} ~\min_{\pi \in \Rge} h_x(\pi).
\end{align*}
In the following, we will analyze the functions $h_x$, first for $x$ that correspond to feasible knapsack solutions (Lemma~\ref{lem_hardness_feasible}) and second for $x$ that do not correspond to feasible knapsack solutions (Lemma~\ref{lem_hardness_infeasible}). In the former case, we will show that the minimal value of $h_x$ is negatively related to the knapsack solution's value. In the latter case, we will show that the minimal value of $h_x$ is always larger than the minimal values occurring in the first case.
Together, this proves that the optimum of~\eqref{2S-C'} corresponds to an optimal knapsack solution, finishing our reduction.

\begin{lemma} \label{lem_hardness_feasible}
  Let $x \in \{0, 1\}^n$ be a vector such that $x_1,\dots,x_m$ encodes a feasible solution of the given knapsack instance, i.e., such that $\sum_{i = 1}^m a_i' x_i \le 1$,
  and $x_i = 0$ for all $i \in \{m + 1, \dots, n\}$.
  Then, the following holds:
  \begin{enumerate}[label=(\alph*)]
  \item $x$ is feasible for problem~\eqref{2S-C'}, i.e., $\sum_{i = 1}^m x_i \le p$. \label{lem_hardness_feasible_feasible}
  \item When fixing $x$ in \eqref{2S-C'}, the unique optimal value for $\pi$ is $\pi = M$, \\ i.e., $\argmin_{\pi \in \Rge} h_x(\pi) = \{M\}$. \label{lem_hardness_feasible_opt_pi}
  \item When fixing $x$ in \eqref{2S-C'}, the optimal objective value is given by $\min_{\pi \in \Rge} h_x(\pi) = {(\Gamma + 1) M + V - \sum_{i = 1}^m v_i x_i}$. \label{lem_hardness_feasible_opt_cost}
  \end{enumerate}
\end{lemma}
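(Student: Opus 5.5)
Part~\ref{lem_hardness_feasible_feasible} is immediate. At most $m$ of the coordinates of $x$ are nonzero, and since $A' < m$ we have $m \le 2m - 2A' + 3 = p$.

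For \ref{lem_hardness_feasible_opt_pi} and \ref{lem_hardness_feasible_opt_cost}, I will work with the continuous min-knapsack form \eqref{CKP} of $g(x,\pi)$ and use the greedy order by ratios. Write $S := \{i \in [m] : x_i = 1\}$, $k := \lvert S\rvert$ and $a'(S) := \sum_{i \in S} a_i'$, so that $a'(S) \le 1$. The ratios are as follows:
\begin{itemize}
\item cheap items: $\underline{c}_i = 0$ and $\overline{c}_i = M$;
\item knapsack items: $\underline{c}_i = v_i/(1-a_i') \le bV < M$ and $\overline{c}_i = (v_i + M)/(1-a_i') > M$;
\item the expensive item: $\underline{c}_{m+1} = M$ and $\overline{c}_{m+1} = 2M$.
\end{itemize}
So the order starts with the cheap lower parts, then the knapsack lower parts. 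Next comes a tie at ratio $M$ between the expensive lower part and the cheap upper parts. Only after that do the knapsack upper parts and the expensive upper part appear.

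At $\pi = M$ the sizes are $\pi_i = 1$ for the cheap items and the expensive item, and $\pi_i = 1 - a_i'$ for the knapsack items. The capacity is $p_x = p - k$. The greedy algorithm first packs the $m - A' + 2$ cheap lower parts at cost $0$. It then packs the knapsack lower parts $i \notin S$, of total size $(m-k) - (A' - a'(S))$ and total cost $V - \sum_{i\in S} v_i$. The remaining capacity is exactly $1 - a'(S) \in [0,1]$, which is filled by the expensive lower part at cost $M(1 - a'(S))$. Adding $C^\top x = M a'(S)$ and $\Gamma M$, the terms in $a'(S)$ cancel, and
\[
h_x(M) = (\Gamma+1)M + V - \sum_{i=1}^m v_i x_i .
\]
By convexity (Lemma~\ref{lem_hx}), both \ref{lem_hardness_feasible_opt_pi} and \ref{lem_hardness_feasible_opt_cost} then follow once I show that the left slope of $h_x$ at $M$ is negative and the right slope is positive.

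\emph{Right slope.} For $\pi = M + \varepsilon$, only the knapsack lower parts grow, each by $(1-a_i')\varepsilon/M$. If $a'(S) < 1$, this extra size displaces the expensive lower part. The derivative of $g$ is then $\sum_{i \notin S} \bigl(v_i/M - (1-a_i')\bigr)$, so the slope of $h_x$ is
\[
\Gamma - (m - k - A' + a'(S)) + \sum_{i\notin S} v_i/M = 1 + k - a'(S) + (\text{small}) > 0 .
\]
If $a'(S) = 1$, nothing is displaced, $g$ is locally constant, and the slope is $\Gamma > 0$.

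\emph{Left slope.} For $\pi = M - \varepsilon$, the cheap lower parts lose total size $(m-A'+2)\varepsilon/M$ and the knapsack lower parts lose $\sum_{i\notin S}(1-a_i')\varepsilon/M$. Their cost drops only by $\sum_{i \notin S} v_i \varepsilon/M$. The lost size must be refilled by items of ratio at least $M$: the expensive lower part and cheap upper parts at ratio $M$, and in degenerate cases possibly items of even larger ratio. Hence $g$ increases at rate at least $(m - A' + 2) + \sum_{i\notin S}(1-a_i') - \sum_{i \notin S} v_i/M$. The slope of $h_x$ is therefore at most
\[
\Gamma - (m - A' + 2) - \sum_{i \notin S} (1-a_i') + V/M \le -1 + V/M < 0 .
\]

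The main obstacle is making these one-sided slope computations rigorous, in particular the left side. There one has to check that the ratio-$M$ items, namely the expensive lower part (whose size also shrinks, which matters when $a'(S) = 0$) and the cheap upper parts of size $1 - \pi/M$, suffice to absorb the lost capacity, or else that any use of higher-ratio items only makes the slope more negative. Since I only need a lower bound on the refill cost of $M$ per unit size, I would argue via ``every item after the knapsack lower parts has ratio $\ge M$'' rather than track exactly which items are used. This keeps the case distinction (including $a'(S) \in \{0,1\}$) harmless.
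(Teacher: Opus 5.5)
Your computation of $h_x(M)$ matches the paper's, and your left-slope argument is sound. Bounding the refill cost below by $M$ per unit is more careful than the paper, which asserts that the lost size is absorbed by the expensive lower part. For $a'(S)=0$ that assertion is false, because even all ratio-$M$ items together cannot refill the lost size; the paper's inequality survives only because the true rate is larger. Two other steps of yours, however, are wrong.

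\emph{Part (a).} The inequality $m\le 2m-2A'+3$ is equivalent to $2A'\le m+3$, and this does not follow from $A'<m$. For $b=m=10$ and $a_i=9$ for all $i$, you get $A'=9$ and $p=5<m$. So counting nonzero coordinates is not enough: feasibility for \eqref{2S-C'} genuinely uses the knapsack constraint. Since $a_i'<1$,
\[
A'=\sum_{i=1}^m a_i'x_i+\sum_{i=1}^m a_i'(1-x_i)\le 1+m-\sum_{i=1}^m x_i ,
\]
hence $\sum_{i=1}^m x_i\le m-A'+1<p$. This is the paper's argument.

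\emph{Right slope when $a'(S)=1$.} Here $g(x,\cdot)$ is not locally constant, because the capacity constraint in \eqref{CKP} is an equality. For $\pi>M$, the knapsack lower parts with $a_i'>0$ grow past $p_x$. The greedy therefore packs the last of them in the order, of ratio $\underline c^\star:=\max_{i\in[m]\setminus S}\underline c_i$, only fractionally. The right derivative of $g(x,\cdot)$ is then $\frac1M\sum_{i\in[m]\setminus S}\bigl(v_i-\underline c^\star(1-a_i')\bigr)\le 0$, which is in general nonzero, so the slope of $h_x$ is not $\Gamma$.

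The conclusion still holds because $\underline c^\star<M$: unpacking at ratio $\underline c^\star$ saves less than displacing ratio-$M$ material. So your formula from the case $a'(S)<1$ remains a lower bound, and the slope is at least $1+k-a'(S)=k\ge 1$; this is how the paper argues.

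A uniform fix in the spirit of your left-side bound works as follows. For every $\pi$ and every feasible point of \eqref{CKP}, the cost is at least $Mp_x-\sum_{e:\,\ratio(e)<M}(M-\ratio(e))\,s_e(\pi)$, where $s_e(\pi)$ is the size of item $e$, with equality at $\pi=M$. This gives $h_x(\pi)-h_x(M)\ge (M-\pi)(1-V/M)$ for $\pi<M$. It also gives $h_x(\pi)-h_x(M)\ge(\pi-M)(1+k-a'(S))$ for $\pi>M$, and $1+k-a'(S)\ge 1$ since $a'(S)\le k$. This needs no case distinction and no appeal to convexity.
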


\begin{proof}
  We first show \ref{lem_hardness_feasible_feasible}. Due to $\sum_{i = 1}^m a_i' x_i \le 1$ and the assumption that $a_i' < 1$ holds for all $i \in [m]$, we have
  \begin{align*}
    \sum_{i = 1}^m x_i + A' = \sum_{i = 1}^m x_i + \sum_{i = 1}^m a_i' x_i + \sum_{i = 1}^m a_i' (1 - x_i) \le \sum_{i = 1}^m x_i + 1
    + \sum_{i = 1}^m (1 - x_i) = m + 1,
  \end{align*}
  and therefore $\sum_{i = 1}^m x_i \le m - A' + 1 < p$.
  Hence, $x$ is feasible for problem~\eqref{2S-C'}.

  Next, we prove that $h_x(M) = (\Gamma + 1) M + V - \sum_{i = 1}^m v_i x_i$.
  After that, we will argue that this is in fact the unique minimum of $h_x$, which then implies \ref{lem_hardness_feasible_opt_pi} and \ref{lem_hardness_feasible_opt_cost}.

  In order to determine $h_x(M)$, we study the behavior of the continuous min-knapsack problem~\eqref{CKP} for our fixed $x$ and the fixed value $\pi = M$.
  As explained above, an optimal solution of this problem can be described by sorting its items by the values $\underline{c}$ and $\overline{c}$. Note that, if the sorting is not unique due to several of the values being equal, any order of them works.
  Here, the items corresponding to $\underline{c}_i = 0$ for $i \in \{m + 2, \dots, n\}$ come first in this order, followed by the items corresponding to $\underline{c}_i$ for $i \in [m] \cap R_x$ (and we are not interested in their precise order). Note that, for all $i \in [m]$, we have $0 \le \underline{c}_i = v_i/(1 - a_i') \le b v_i < M$. Next are the items corresponding to $\underline{c}_{m + 1} = M$ and $\overline{c}_i = M$ for $i \in \{m + 2, \dots, n\}$. Finally, the items corresponding to $\overline{c}_i > M$ for $i \in [m + 1] \cap R_x$ follow (again, we are not interested in their precise order).
  We will see that the items corresponding to $\overline{c}_i$ are never selected.
  Regarding the sizes of the items in~\eqref{CKP},
  note that we have $\pi \le d_i$ for all $i \in [n]$, and therefore $\pi_i = \pi/d_i = M/(M/(1 - a_i')) = 1 - a_i'$ for all $i \in [m]$ and $\pi_{i} = M/M = 1$ for all $i \in \{m + 1, \dots, n\}$.
  We now investigate when the capacity $p_x$ is reached when adding the sizes ($\pi_i$ or $(1 - \pi_i)$) of the items in the given order.
  The items corresponding to $\underline{c}_i$ for $i \in R_x \setminus \{m + 1\}$ have a total size of
  \begin{align*}
    \sum_{i \in R_x \setminus \{m + 1\}} \pi_i &= (m - A' + 2) + \sum_{i = 1}^m \pi_i (1 - x_i) = (m - A' + 2) + \sum_{i = 1}^m (1 - a_i') (1 - x_i) \\
                                               &= (m - A' + 2) + m - \sum_{i = 1}^m a_i' - \sum_{i = 1}^m x_i + \sum_{i = 1}^m a_i' x_i = 2 m - 2 A' + 2 - \sum_{i = 1}^m x_i + \sum_{i = 1}^m a_i' x_i \\
                                               &= p - 1 - \sum_{i = 1}^m x_i + \sum_{i = 1}^m a_i' x_i = p_x + \sum_{i = 1}^m a_i' x_i - 1 \le p_x.
  \end{align*}
  Hence, all of these items are packed, i.e., $\underline{z}_i = 1$ for all $i \in R_x \setminus \{m + 1\}$.
  It remains to select a fraction of $1 - \sum_{i = 1}^m a_i' x_i \ge 0$ of the next item in the order,
  which corresponds to $\underline{c}_{m + 1}$ and has size $\pi_{m + 1} = 1$.
  Hence, we set $\underline{z}_{m + 1} = 1 - \sum_{i = 1}^m a_i' x_i$.
  Therefore, the optimal value of~\eqref{CKP} can be written as
  \begin{align*}
    g(x, M) = \sum_{i \in R_x} \underline{c}_i \pi_i \underline{z}_i = \sum_{i = 1}^m \underline{c}_i \pi_i (1 - x_i) + \underline{c}_{m + 1} \pi_{m + 1} \underline{z}_{m + 1} = \sum_{i = 1}^m v_i (1 - x_i) + M (1 - \sum_{i = 1}^m a_i' x_i).
  \end{align*}
  Together with the first-stage costs $\sum_{i = 1}^m C_i x_i = M \sum_{i = 1}^m a_i' x_i$,
  the total objective value of the solution consisting of $x$ and $\pi = M$ in~\eqref{2S-C'} is
  \begin{align*}
    h_x(M) = \sum_{i = 1}^m C_i x_i + \Gamma M + g(x, M) = (\Gamma + 1) M + V - \sum_{i = 1}^m v_i x_i.
  \end{align*}

  It remains to show that this is the unique minimum of the function $h_x$.
  For this, recall from Lemma~\ref{lem_hx} that $h_x$ is continuous, convex, and piecewise linear.
  We will show that $\pi = M$ is a breakpoint of $h_x$, the linear piece left of $\pi = M$ has negative slope, and the linear piece right of $\pi = M$ has positive slope.
  As $h_x$ is convex, this implies that $\pi = M$ is the unique minimum.

  Recall that changing $\pi$ changes the item sizes and costs in the continuous min-knapsack problem~\eqref{CKP}, while the ordering of the items remains the same.
  Let us first consider the situation where $\pi$ is slightly smaller than $M$.
  The intuition is that all of the items that are packed in the solution described above, corresponding to $\underline{c}_i$ for $i \in R_x$, become slightly smaller as well, which leads to a slightly larger fraction $\underline{z}_{m + 1}$ of the expensive fractional item being packed, thus increasing the total objective value.
  More precisely, when decreasing $\pi$, the size $\pi_i = \pi/d_i$ of each item corresponding to some $\underline{c}_i$ decreases linearly at a rate of $1/d_i$. The total size of the fully packed items, corresponding to $\underline{c}_i$ for all $i \in R_x \setminus \{m + 1\}$, thus decreases at a rate of $\sum_{i \in R_x \setminus \{m + 1\}} 1/d_i$.
  Accordingly, the size of the additionally packed fraction of the item corresponding to $\underline{c}_{m + 1} = M$ increases at the same rate.
  The total cost of the solution then changes linearly as well, at a rate of
  \begin{align*}
    &- \sum_{i \in R_x \setminus \{m + 1\}} \frac{\underline{c}_i}{d_i} + \underline{c}_{m + 1} \sum_{i \in R_x \setminus \{m + 1\}} \frac{1}{d_i} = \sum_{i \in R_x \setminus \{m + 1\}} \frac{\underline{c}_{m + 1} - \underline{c}_i}{d_i} \\
    &= (m - A' + 2) \frac{M - 0}{M} + \sum_{i = 1}^m (1 - x_i) \frac{M - v_i/(1 - a_i')}{M/(1 - a_i')} \\
    &= \Gamma + 1 + \sum_{i = 1}^m (1 - x_i)(1 - a_i') - \sum_{i = 1}^m (1 - x_i) \frac{v_i}{M} \\
    &\ge \Gamma + 1 - \frac{V}{M} > \Gamma.
  \end{align*}
  Hence, the slope of the linear piece left of $\pi = M$ is less than $- \Gamma$ for the function $g(x, \cdot)$
  and negative for the function $h_x$, which includes the additional linear function $\Gamma \pi$.

  Similarly, consider the situation where $\pi$ is slightly larger than $M$. Here, the items corresponding to $\underline{c}_i$ for $i \in [m] \cap R_x$ become slightly larger, while the size of the cheap items $i \in \{m + 2, \dots, n\}$ is constant $\pi_i = \min\{1, \pi/M\} = 1$ now. This leads to a slightly smaller fraction $\underline{z}_{m + 1}$ of the expensive fractional item being packed or, if $\underline{z}_{m + 1} = 0$ holds already for $\pi = M$ (i.e., if $\sum_{i = 1}^m a_i' x_i = 1$), to some small fraction of the previous item in the order being unpacked. This previous item corresponds to $\max_{i \in [m] \cap R_x} \underline{c}_i < M$.
  Similarly as above, the change in the total cost of the solution is linear again and happens at a rate of at least
  \begin{align*}
    &\sum_{i \in [m] \cap R_x} \frac{\underline{c}_i}{d_i} - M \sum_{i \in [m] \cap R_x} \frac{1}{d_i} = \sum_{i = 1}^m (1 - x_i) \frac{\underline{c}_i - M}{d_i} \\
    &= \sum_{i = 1}^m (1 - x_i) \frac{v_i/(1 - a_i') - M}{M/(1 - a_i')} = \sum_{i = 1}^m (1 - x_i) \frac{v_i}{M} - \sum_{i = 1}^m (1 - x_i) (1 - a_i') \\
    &= \sum_{i = 1}^m (1 - x_i) \frac{v_i}{M} - m + \sum_{i = 1}^m x_i + \sum_{i = 1}^m a_i' - \sum_{i = 1}^m a_i' x_i > -m + A' - 1 = - \Gamma.
  \end{align*}
  Hence, the slope of the linear piece right of $\pi = M$ is greater than $- \Gamma$ for the function $g(x, \cdot)$
  and positive for the function $h_x$.
  
  This shows that the convex function $h_x$ indeed attains its unique minimum in $\pi = M$ and thus finishes the proof of \ref{lem_hardness_feasible_opt_pi} and \ref{lem_hardness_feasible_opt_cost}.
\end{proof}

\begin{lemma} \label{lem_hardness_infeasible}
  Let $x \in \{0, 1\}^n$ be such that it is feasible for problem~\eqref{2S-C'}, i.e., $\sum_{i = 1}^n x_i \le p$, and does not meet the conditions of Lemma~\ref{lem_hardness_feasible}, i.e., $\sum_{i = 1}^m a_i' x_i > 1$ or $x_i = 1$ for some $i \in \{m + 1, \dots, n\}$.
  Then, when fixing $x$ in \eqref{2S-C'}, the optimal objective value satisfies $\min_{\pi \in \Rge} h_x(\pi) > (\Gamma + 1) M + V$.
\end{lemma}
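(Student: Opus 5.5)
The plan has two cases, according to which condition of Lemma~\ref{lem_hardness_feasible} fails. Throughout, I will use that for fixed $x$ the inner minimization over $\pi$ is an LP dual, so
\[
\min_{\pi \in \Rge} h_x(\pi) = \sum_{i=1}^n C_i x_i + \max_{c \in U} \min_{y \in Y(x)} \sum_{i=1}^n c_i y_i .
\]
This holds by the dualization in Section~\ref{sec_reformulation} together with strong LP duality. The useful consequence is that a lower bound on $\min_\pi h_x(\pi)$ only requires exhibiting one scenario $c \in U$ and bounding the cheapest completion against it. Equivalently, I can bound $h_x(\pi)$ for every $\pi$ using the slope arguments of Lemma~\ref{lem_hx}, and I will keep that as a backup.

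\emph{Case 1: $x_i = 1$ for some $i \in \{m+1,\dots,n\}$.} This case is immediate. The first-stage cost alone is at least $(m+3)M$, and the second-stage term is nonnegative because $U \subseteq \Rge^n$. Since $A' \ge 0$, we have $(\Gamma+1)M + V = (m - A' + 2)M + V \le (m+2)M + V$. This is strictly less than $(m+3)M$ because $M = 3mbV > V$.

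\emph{Case 2: $x_i = 0$ for all $i > m$ and $s := \sum_{i=1}^m a_i' x_i > 1$.} Since the $a_i$ are integers, $s \ge 1 + 1/b$. Let $k := \sum_i x_i$; the first-stage cost is $Ms$. I will pick the scenario that mirrors the structure at $\pi = M$ in Lemma~\ref{lem_hardness_feasible}, namely $\delta_i := 1 - a_i'$ for each unselected knapsack item $i \in [m] \cap R_x$. This raises each such item to cost $\underline c_i + M \ge M$ and uses budget $m - k - A' + s$. The remaining budget $1 + k - s$ is spread over the cheap items, or alternatively one cheap item is raised fully and the rest fractionally. The aim is to arrange that the $p_x$ cheapest remaining items cost in total at least about $(\Gamma+1)M - Ms + M(s-1)$. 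Adding the first-stage cost $Ms$ should then give at least $(\Gamma+1)M + M(s-1) - O(V) \ge (\Gamma+1)M + M/b - O(V)$. Because $M/b = 3mV$ dominates every $v$-dependent term, which is at most $bV$ after the factor $1/(1-a_i') \le b$, this yields the strict inequality $> (\Gamma+1)M + V$.

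The main obstacle is the counting in Case 2. The second stage picks $p_x = 2m - 2A' + 3 - k$ items from $m - k$ knapsack items, the expensive item and $m - A' + 2$ cheap items, so it may omit exactly $A'$ items. I have to check that the adversary's leftover budget $1 + k - s$ on the cheap items cannot be dodged by the second stage omitting attacked cheap items in place of knapsack items, which cost roughly $M$ each. If the direct scenario bound is awkward, I will instead use the LP/knapsack view \eqref{CKP}. In that view I will show that for every $\pi$, $h_x(\pi) \ge h_x(M)$-type expressions hold, via the slope computations of Lemma~\ref{lem_hardness_feasible}. These show the slope left of $M$ is still negative. At $\pi = M$ the packed $\underline c$-items now have total size $p_x + s - 1 > p_x$, so the knapsack is overfull by $s-1$. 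As a result $g(x, \cdot)$ changes behaviour (the slope to the right increases), and the minimum over $\pi$ is pushed to some $\pi < M$, with the deficit priced at about $M$ per unit. Quantifying this excess as at least $M(s-1) - O(bV)$ is the step I would work hardest on.
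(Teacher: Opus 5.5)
Your Case 1 is correct and coincides with the paper's argument. The genuine gap is Case 2. You leave it open, and the scenario you propose there cannot close it.

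\textbf{Why the proposed scenario fails.} Raising each unselected knapsack item by $\delta_i = 1-a_i'$ makes these items the most expensive ones in $R_x$, at cost $\underline{c}_i + M \ge M$. The second stage may leave out $A'$ items of $R_x$, so it simply leaves these out, and the budget $\sum_{i\in[m]\cap R_x}(1-a_i')$ is wasted. The leak is through the knapsack items, not through the attacked cheap items you worry about.

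If $m-k\ge A'$, then for every way of spreading the leftover budget $1+k-s$ over the cheap items, the total cost is exactly $(\Gamma+1)M$ plus the $\underline{c}_i$ of the $m-k-A'$ cheapest unselected knapsack items. This need not exceed $(\Gamma+1)M+V$. In the paper's example with $x$ encoding $S^4$, you get $2700+2100+4\cdot 900 = 8400 < 8425$.

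The value $1-a_i'$ is $\pi_i$ at $\pi=M$. That is the size of the cheap part of $y_i$ in the minimizer's knapsack \eqref{CKP}, not an attack level, so ``mirroring $\pi=M$'' mixes primal and dual.

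\textbf{Why the target bound is off.} Up to an additive term in $[0,V]$, $M(s-1)$ is the excess of $h_x(M)$ over $(\Gamma+1)M$. But the minimizer of $h_x$ is $\pi^\star = Mp_x/(p_x+s-1) < M$. There the excess is only $M(s-1)\theta + \bigl(V-\sum_{i=1}^m v_ix_i\bigr)\pi^\star/M$, where $\theta := 1-\Gamma/(p_x+s-1)$. This $\theta$ can be as small as $1/(\Gamma+1)$, for instance when $k=m$. The argument needs $\theta>1/(3m)$ together with $s-1\ge 1/b$. That, not a comparison of $M/b$ with $bV$, is what dictates $M=3mbV$. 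Likewise, in your backup plan the slope of $h_x$ just left of $M$ is positive, not negative, because $h_x$ is convex with minimizer $\pi^\star<M$.

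\textbf{The missing idea: the right scenario (equivalently, the right $\pi$).} Raise all items of $R_x\setminus\{m+1\}$ to one common level $L$. Set $\delta_i = L/M$ for the cheap items and $\delta_i = (L-\underline{c}_i)(1-a_i')/M$ for $i\in[m]\cap R_x$.

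Since $\Gamma+1+\sum_{i\in[m]\cap R_x}(1-a_i') = p_x+s-1$, spending exactly $\Gamma$ gives $L=(\Gamma M+V_R)/(p_x+s-1)$, where $V_R:=\sum_{i\in[m]\cap R_x} v_i$.

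From $p_x+s-1-\Gamma\ge 1$ and $p_x+s-1<3m$ you get $bV<L<M$. Recalling $\underline{c}_i\le bV$, all $\delta_i\in[0,1]$, and the expensive item (cost $M$) is avoided. Since $R_x\setminus\{m+1\}$ has $p_x+A'-1\ge p_x$ items, the second stage pays exactly $p_xL$. Using $\theta\ge 1/(p_x+s-1)>1/(3m)$ and $s-1\ge 1/b$,
\[
Ms+p_xL-(\Gamma+1)M-V \;\ge\; M(s-1)\theta - V \;>\; \frac{M}{3mb}-V = 0 .
\]
One checks that $Ms+p_xL=h_x(\pi^\star)$, so this is the adversary's optimal response, dual to the paper's $\pi^\star$.

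\textbf{Comparison with the paper.} The paper computes $h_x(\pi^\star)$ via \eqref{CKP}; at $\pi^\star$ the items $\underline{z}_i$, $i\in R_x\setminus\{m+1\}$, exactly fill the capacity $p_x$. It then has to prove, by the slope analysis, that $\pi^\star$ is the minimizer. With the corrected scenario, your weak-duality route gives a complete and somewhat shorter proof, because a lower bound needs no minimality argument.
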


\begin{proof}
  First consider the case where $x_i = 1$ for some $i \in \{m + 1, \dots, n\}$.
  Then the first-stage costs of the solution $x$ are very high and we have $\min_{\pi \in \Rge} h_x(\pi) \ge \sum_{i = 1}^n C_i x_i \ge (m + 3) M > (\Gamma + 2) M > (\Gamma + 1) M + V$, which proves the claim in this case.

  For the rest of the proof, assume that $\sum_{i = 1}^m a_i' x_i > 1$ and $x_i = 0$ for all $i \in \{m + 1, \dots, n\}$.
  Define $\pi^\star = M p_x / (p_x + \sum_{i = 1}^m a_i' x_i - 1)$.
  Similarly to the proof of Lemma~\ref{lem_hardness_feasible}, we will first determine the value $h_x(\pi^\star)$ and show that $h_x(\pi^\star) > (\Gamma + 1) M + V$, and then argue that $\pi = \pi^\star$ is the unique minimum of $h_x$ by investigating the slopes left and right of this point.

  As in the proof of Lemma~\ref{lem_hardness_feasible}, $h_x(\pi^\star)$ can be determined by studying the behavior of the continuous min-knapsack problem~\eqref{CKP} for $x$ and $\pi^\star$. In fact, its optimal solution is given by $\underline{z}_i = 1$ for all $i \in R_x \setminus \{m + 1\}$ because these are again the first items in the greedy order and their total size precisely fills the capacity~$p_x$:
  \begin{align*}
    \sum_{i \in R_x \setminus \{m + 1\}} \pi_i &= (m - A' + 2) \frac{\pi^\star}{M} + \sum_{i = 1}^m (1 - x_i) \frac{\pi^\star}{d_i}\\
                                               &= \frac{p_x}{p_x + \sum_{i = 1}^m a_i' x_i - 1} \left(m - A' + 2 + \sum_{i = 1}^m (1 - x_i)(1 - a_i')\right)  \\
                                               &= \frac{p_x}{p_x + \sum_{i = 1}^m a_i' x_i - 1} \left(p_x + \sum_{i = 1}^m a_i' x_i - 1\right) = p_x
  \end{align*}
  The optimal value of \eqref{CKP} is therefore
  \begin{align*}
    g(x, \pi^\star) = \sum_{i \in R_x} \underline{c}_i \pi_i \underline{z}_i = \sum_{i = 1}^m (1 - x_i) \underline{c}_i \frac{\pi^\star}{d_i} = \sum_{i = 1}^m v_i (1 - x_i) \frac{\pi^\star}{M} = \left(V - \sum_{i = 1}^m v_i x_i\right) \frac{\pi^\star}{M}.
  \end{align*}
  The total objective value of $(x, \pi^\star)$ in \eqref{2S-C'} can then be written as
  \begin{align*}
    h_x(\pi^\star) &= \sum_{i = 1}^m C_i x_i + \Gamma \pi^\star + g(x, \pi^\star) = M \sum_{i = 1}^m a_i' x_i + \Gamma \pi^\star + \left(V - \sum_{i = 1}^m v_i x_i\right) \frac{\pi^\star}{M} \\
               &= M + M \left(\sum_{i = 1}^m a_i' x_i - 1\right) + \Gamma M - \Gamma M \left(1 - \frac{\pi^\star}{M}\right) + V - V \left(1 - \frac{\pi^\star}{M}\right) - \sum_{i = 1}^m v_i x_i \frac{\pi^\star}{M}\\
               &= (\Gamma + 1) M + V + \varepsilon,
  \end{align*}
  where $\varepsilon = M (\sum_{i = 1}^m a_i' x_i - 1) - \Gamma M (1 - \pi^\star/M) - V (1 - \pi^\star/M) - \sum_{i = 1}^m v_i x_i \pi^\star/M$.
  It remains to show that $\varepsilon > 0$ because this implies the claimed assertion $h_x(\pi^\star) > (\Gamma + 1) M + V$.
  We can derive
  \begin{align*}
    \varepsilon &= M \left(\sum_{i = 1}^m a_i' x_i - 1 - \Gamma \left(1 - \frac{\pi^\star}{M}\right)\right) - V + \sum_{i = 1}^m v_i (1 - x_i) \frac{\pi^\star}{M} \\
                &\ge M \left(\sum_{i = 1}^m a_i' x_i - 1 - \Gamma \frac{\sum_{i = 1}^m a_i' x_i - 1}{p_x + \sum_{i = 1}^m a_i' x_i - 1}\right) - V \\
                &= M \left(\sum_{i = 1}^m a_i' x_i - 1\right) \delta - V ,
  \end{align*}
  where $\delta := 1 - \Gamma/(p_x + \sum_{i = 1}^m a_i' x_i - 1)$.
  Note that we have $\delta > 1/(3m)$ because $p_x + \sum_{i = 1}^m a_i' x_i - 1 < p + A' = 2m - A' + 3 \le 3m$ and
  \begin{align*}
    p_x + \sum_{i = 1}^m a_i' x_i - 1 - \Gamma &= m - A' + 1 - \sum_{i = 1}^m x_i + \sum_{i = 1}^m a_i' x_i \\
                                               &= \sum_{i = 1}^m (1 - a_i')(1 - x_i) + 1 \ge 1.
  \end{align*}
  Together with $\sum_{i = 1}^m a_i' x_i - 1 \ge 1/b$ (which follows from $\sum_{i = 1}^m a_i' x_i > 1$ due to the integrality of $a$ and $b$) and the definition of $M$, this finally gives $\varepsilon > M/(3mb) - V = 0$.

  Next, we prove that the function $h_x$ actually attains its minimum at $\pi = \pi^\star$.
  The slope of $h_x$ left of $\pi = \pi^\star$ is exactly the same as in the proof of Lemma~\ref{lem_hardness_feasible} and therefore again negative.
  In the analysis of the slope right of $\pi = \pi^\star$, there are two differences to the proof of Lemma~\ref{lem_hardness_feasible}.
  First, since $\pi^\star < M$, not only the sizes of the items corresponding to $\underline{c}_i$ for $i \in [m] \cap R_x$ increase, but also the sizes $\pi_i = \pi/M$ of the cheap items corresponding to $\underline{c}_i$ for $i \in \{m + 2, \dots, n\}$.
  Second, we have $\underline{z}_{m + 1} = 0$ already at $\pi = \pi^\star$, so the item we unpack a fraction of is always the previous one in the order, corresponding to $\underline{c}^\star := \max_{i \in [m] \cap R_x} \underline{c}_i$. Note that $\underline{c}_i = v_i/(1 - a_i') \le bV$ for all $i \in [m]$ and therefore $\underline{c}^\star \le bV$. The total cost of the solution of \eqref{CKP}, when slightly increasing $\pi$ starting from $\pi = \pi^\star$, now changes linearly at a rate of
  \begin{align*}
    \sum_{i \in [m] \cap R_x} \frac{\underline{c}_i}{d_i} - \underline{c}^\star \sum_{i \in R_x \setminus \{m + 1\}} \frac{1}{d_i} &\ge - \underline{c}^\star \left(\sum_{i = 1}^m (1 - x_i) \frac{1}{d_i} + (m - A' + 2)\frac{1}{M}\right) \\
                                                                                                                 &= - \frac{\underline{c}^\star}{M} \left(\sum_{i = 1}^m (1 - x_i) (1 - a_i') + m - A' + 2\right) \\
                                                                                                                 &= - \frac{\underline{c}^\star}{M} \left(2 \Gamma - \sum_{i = 1}^m x_i (1 - a_i')\right) \\
                                                                                                                 &\ge - \Gamma \frac{2 \underline{c}^\star}{M} \ge - \Gamma \frac{2 b V}{M} > - \Gamma
  \end{align*}
  Hence, like in the proof of Lemma~\ref{lem_hardness_feasible}, the slope of the linear piece right of $\pi = \pi^\star$ is greater than $- \Gamma$ for the function $g(x, \cdot)$ and positive for the function $h_x$.
  This shows that $\pi^\star$ indeed represents the unique minimum of $h_x$ and thus finishes the proof.
\end{proof}

As outlined above, we have now presented all ingredients for the proof of Theorem~\ref{thm_hardness}:

\begin{proof}[Proof of Theorem~\ref{thm_hardness}]
  We have described a reduction from the NP-hard knapsack problem to the problem~\eqref{2S-C'}.
  In Lemmas~\ref{lem_hardness_feasible} and~\ref{lem_hardness_infeasible}, we have analyzed two types of feasible solutions~$x$ of the constructed instance of~\eqref{2S-C'}.
  The cost of the solutions considered in Lemma~\ref{lem_hardness_infeasible} is always larger than the cost of the solutions considered in Lemma~\ref{lem_hardness_feasible}.
  The solutions considered in Lemma~\ref{lem_hardness_feasible} directly correspond to the feasible solutions of the given knpasack instance.
  Moreover, according to Lemma~\ref{lem_hardness_feasible}\ref{lem_hardness_feasible_opt_cost}, the objective value achieved by such a solution~$x$, together with the best corresponding choice of~$\pi$, is minimal when the value of the knapsack solution is maximal.
  Thus, an optimal solution of problem~\eqref{2S-C'} directly gives an optimal solution of the knapsack problem.
\end{proof}

Our reduction can also be used to conclude that problem~\eqref{2S-C'} is still NP-hard when the value of $\pi$ is fixed:

\begin{theorem} \label{thm_hardness_fixed_pi}
  The problem~\eqref{2S-C'} with a fixed value of $\pi$ is NP-hard.
\end{theorem}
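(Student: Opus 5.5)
We reuse the reduction constructed for Theorem~\ref{thm_hardness} verbatim and fix $\pi := M$. The problem then becomes $\min_x h_x(M)$ over all $x\in\{0,1\}^n$ with $\sum_i x_i\le p$. We show that this fixed-$\pi$ problem still separates feasible from infeasible knapsack solutions, and that it ranks the feasible ones by value.

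For $x$ encoding a feasible knapsack solution with $x_{m+1}=\dots=x_n=0$, Lemma~\ref{lem_hardness_feasible}\ref{lem_hardness_feasible_opt_cost} already gives $h_x(M)=(\Gamma+1)M+V-\sum_{i=1}^m v_ix_i$. The computation in its proof evaluates $h_x$ exactly at $\pi=M$, so this holds without further work.

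For every other feasible $x$ we argue $h_x(M)\ge \min_{\pi\in\Rge} h_x(\pi) > (\Gamma+1)M+V$. This is exactly Lemma~\ref{lem_hardness_infeasible}. The inequality between the fixed-$\pi$ value and the minimum is trivial. Hence, for fixed $\pi=M$, every such $x$ has value strictly larger than $(\Gamma+1)M+V$, whereas every knapsack-feasible $x$ has value at most $(\Gamma+1)M+V$.

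An optimal $x$ for \eqref{2S-C'} with $\pi=M$ fixed is therefore a knapsack-feasible vector maximizing $\sum_i v_ix_i$. This gives a polynomial reduction from knapsack. The instance size and $M$ remain polynomially encodable, as before. I do not expect a real obstacle here. The only point to check is that the value formula of Lemma~\ref{lem_hardness_feasible} is indeed the evaluation at $\pi=M$ and not merely the minimum value; since the lemma shows $\argmin h_x=\{M\}$, these two coincide.
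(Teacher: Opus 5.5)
Your proposal is correct and follows the paper's proof essentially step by step. You reuse the construction with $\pi := M$, get the values of the knapsack-feasible $x$ from Lemma~\ref{lem_hardness_feasible}, and separate all other feasible $x$ via $h_x(M) \ge \min_{\pi \in \Rge} h_x(\pi) > (\Gamma+1)M+V$ from Lemma~\ref{lem_hardness_infeasible}.
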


\begin{proof}
  A reduction from the NP-hard knapsack problem can be obtained using the same construction as for Theorem~\ref{thm_hardness} and additionally setting the fixed value of $\pi$ as $\pi := M$.
  Due to Lemma~\ref{lem_hardness_feasible}\ref{lem_hardness_feasible_opt_pi}, the solutions~$x$ of the constructed instance of~\eqref{2S-C'} that correspond to feasible knapsack solutions behave exactly as before.
  Lemma~\ref{lem_hardness_infeasible} implies that, for all other feasible solutions~$x$ of~\eqref{2S-C'}, the cost $h_x(M) \ge \min_{\pi \in \Rge} h_x(\pi) > (\Gamma + 1) M + V$ is again worse than the one of any solution of the first kind.
  Thus, also when fixing $\pi = M$, an optimal solution of problem~\eqref{2S-C'} gives an optimal solution of the knapsack problem.
\end{proof}

\begin{remark}
  While Theorem~\ref{thm_hardness} implies NP-hardness of (2S-C),
  it is a priori not clear whether the decision variant of (2S-C) is contained in NP.
  However, the containment in NP follows from the arguments presented in \cite[Corollary 4]{goerigk2024complexity}.
  Therefore, the decision variant of (2S-C) is NP-complete.
\end{remark}

Finally, we demonstrate that the NP-hardness result of Theorem~\ref{thm_hardness} can be extended from the two-stage robust selection problem to two-stage versions of other nominal problems, in particular the assignment problem. This partially answers the open problem number 15 in \cite{robook}.
In the assignment problem, a bipartite graph with edge costs is given, and the task is to find a perfect matching of minimum cost.

\begin{corollary} \label{cor_hardness_assignment}
  The two-stage robust assignment problem with continuous budgeted uncertainty is NP-hard.
\end{corollary}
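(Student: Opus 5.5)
We reduce (2S-C) to the two-stage robust assignment problem with continuous budgeted uncertainty. The reduction is approximation- and value-preserving, so NP-hardness carries over from Theorem~\ref{thm_hardness}.

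The construction is the standard embedding of a selection problem into an assignment problem. Given an instance of (2S-C) with $n$ items, cardinality $p$, first-stage costs $C$, and data $\underline{c}, d, \Gamma$, build a bipartite graph as follows.
\begin{itemize}
\item The left side has $n$ vertices $u_1,\dots,u_n$, one per item.
\item The right side has $n$ vertices: $p$ ``selection'' vertices $s_1,\dots,s_p$ and $n-p$ ``rejection'' vertices $r_1,\dots,r_{n-p}$.
\item To keep the correspondence tight, replace each item~$i$ by a small gadget: a left vertex $u_i$, a right vertex $w_i$, an item edge $\{u_i,w_i\}$, and dummy edges.
\end{itemize}
The gadget is arranged so that item $i$ is ``selected'' exactly when the single item edge $e_i$ is \emph{not} used and $u_i$ is matched towards the selection side instead; equivalently, selection corresponds to using one designated edge $e_i$. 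Concretely, I plan the following.
\begin{itemize}
\item Each $u_i$ is adjacent to all $s_k$ via its item edges $e_{i,k}$, and to all $r_l$ via dummy edges.
\item Every dummy edge has first- and second-stage cost $0$ and deviation $0$.
\item Every item edge $e_{i,k}$ has first-stage cost $C_i$, lower cost $\underline{c}_i$, and deviation $d_i$.
\end{itemize}
A perfect matching then selects exactly $p$ items, namely those matched to some $s_k$.

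The key issue is that the uncertainty set lives on edges, not on items. The worst case must not exploit the $p$ parallel copies $e_{i,1},\dots,e_{i,p}$ of each item, which have independent $\delta$-variables. Against a fixed second-stage response, the adversary effectively faces the $\delta$ on the edges actually used. Because the second stage recourse minimizes over matchings after seeing $c$, however, the adversary must raise all copies of an item to hurt it. That would cost $p$ times the budget and break the equivalence.

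To avoid this, I would instead use a gadget in which each item corresponds to exactly one uncertain edge, as follows.
\begin{itemize}
\item For each item $i$, use a left vertex $u_i$ and a right vertex $v_i$ joined by the uncertain edge $e_i$.
\item Add $n-p$ left ``absorber'' vertices $a_1,\dots,a_{n-p}$ and $p$ right absorber vertices $b_1,\dots,b_p$, connected by zero-cost deterministic edges.
\item Zero-cost edges join every $a_l$ to every $v_i$, and every $u_i$ to every $b_k$.
\end{itemize}
In a perfect matching, exactly $n-p$ of the $v_i$ are covered by absorbers. Then exactly $p$ pairs $u_i,v_i$ must use $e_i$, by a counting argument: the $u_i$ not using $e_i$ must go to the $b_k$, of which there are $p$, so at most $p$, and hence exactly $p$, edges $e_i$ are unused. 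This gives ``choose $n-p$ items'' rather than $p$. I therefore use it with $p' := n-p$ absorbers swapped appropriately, so that exactly $p$ edges $e_i$ are used.

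With this gadget, perfect matchings correspond bijectively (up to absorber permutations) to $p$-subsets of items. Item edges carry $C_i$, $\underline{c}_i$, $d_i$, and all other edges are deterministic with cost $0$. The first-stage/second-stage split transfers directly, and the budgeted set over edges restricts to the budgeted set over items, since deterministic edges have $d=0$ and waste no budget. Hence the optimal values coincide, and Theorem~\ref{thm_hardness} yields the claim.

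The main step to verify carefully is this counting argument. In particular, we need that first-stage partial matchings on absorber edges cannot create new options: they cost nothing, so it is no loss to fix them freely, and their feasibility in $X'$ only requires extendability. We also need that the second-stage recourse for fixed first-stage item edges behaves exactly like the selection recourse.
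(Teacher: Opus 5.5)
Your final construction is essentially the paper's: item edges $\{u_i,v_i\}$ carry $(C_i,\underline{c}_i,d_i)$, all other edges are zero-cost deterministic absorber edges, and first-stage dummy edges can be ignored. However, the gadget as you specify it fails.

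With $n-p$ left absorbers and $p$ right absorbers, the left side has $2n-p$ vertices and the right side has $n+p$. So no perfect matching exists unless $n=2p$. Your counting argument shows the same problem: it concludes both that exactly $p$ edges $e_i$ are used and that exactly $p$ are unused. Swapping the two counts ($p$ left, $n-p$ right) leaves the graph unbalanced, so ``swapped appropriately'' does not repair it.

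What you need is $n-p$ absorbers on \emph{each} side, as follows.
\begin{itemize}
\item The left absorbers are adjacent to all $v_i$, and the right absorbers to all $u_i$.
\item There are no edges between absorbers. Otherwise matchings could contain more than $p$ item edges, and the correspondence with (2S-C) would break.
\end{itemize}
Every absorber has to be matched, so the right absorbers take exactly $n-p$ of the $u_i$, and the left absorbers exactly $n-p$ of the $v_i$. Each of the remaining $p$ vertices $u_i$ can then only use $e_i$. This forces the unabsorbed $v$'s to be exactly their partners. Hence every perfect matching contains exactly $p$ item edges, and every $p$-subset of items arises. This is precisely the paper's graph with $\lvert V\rvert=\lvert W\rvert=2n-p$.

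You should also state the dummy-edge step as a dominance argument rather than ``fixing them freely.'' Let $x$ be a first-stage solution containing a set $D$ of dummy edges, and let $x'=x\setminus D$. For every $y\in Y(x)$, the set $y\cup D$ lies in $Y(x')$ and has the same cost in every scenario, because dummy edges have $\underline{c}_e=d_e=0$. The first-stage costs of $x$ and $x'$ also agree, so $x'$ is at least as good as $x$. Moreover, any set of at most $p$ item edges extends to a perfect matching. First-stage solutions can therefore be identified with item sets $S$ with $\lvert S\rvert\le p$.

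With these corrections, the rest of your argument goes through. In particular, edges with $d_e=0$ waste no budget, so the edge budget set projects onto the item budget set.
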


\begin{proof}
  The assignment problem can be considered to be a special case of the selection problem; see, e.g., \cite{KasperskiEtAl2013}.
  Such a construction can easily be transferred to the two-stage versions of the problems:
  Let an instance $(n, p, C, \underline{c}, d, \Gamma)$ of the two-stage selection problem with budgeted uncertainty be given.
  Define a bipartite graph $G = (V \cup W, E)$ with $\lvert V \rvert = \lvert W \rvert = 2n - p$,
  and denote its vertices by $V = \{v_1, \dots, v_{2n - p}\}$ and $W = \{w_1, \dots, w_{2n - p}\}$.
  The edge set is given by
  \begin{align*}
    E = \left\{\{v_i, w_i\} : i \in [n]\right\} &\cup \left\{\{v_i, w_j\} : i \in [n], j \in \{n + 1, \dots, 2 n - p\}\right\} \\
    & \cup \left\{\{v_i, w_j\} : i \in \{n + 1, \dots, 2 n - p\}, j \in [n]\right\}.
  \end{align*}
  Define the two-stage assignment problem's first-stage costs $C' \in \Rge^E$ and the parameters $\underline{c}' \in \Rge^E$ and $d' \in \Rge^E$ of the uncertainty set as follows: For all $i \in [n]$, set $C'_{\{v_i, w_i\}} := C_i$, $\underline{c}'_{\{v_i, w_i\}} := \underline{c}_i$, and $d'_{\{v_i, w_i\}} := d_i$. For all other edges $e \in E$, set $C'_e = \underline{c}'_e = d'_e := 0$.
  Finally, set $\Gamma' := \Gamma$.

  It can be easily checked that every perfect matching in $G$ contains exactly $p$ edges from the set $\{\{v_i, w_i\} : i \in [n]\}$.
  Hence, there is a direct correspondence between the feasible item selections and the perfect matchings.
  Finally, note that we may assume that a first-stage solution of the two-stage assignment instance only consists of edges in $\{\{v_i, w_i\} : i \in [n]\}$, since all other edges can still be selected at cost $0$ in the second stage.
\end{proof}

\section{Multi-Representative Selection with Alternative Continuous Budgeted Sets}
\label{sec:multi-alt}

We show that the problem~\eqref{2MRS-AC} can be solved in polynomial time, using similar arguments as in \cite{ChasseinEtAl2018}. First, we substitute the variables $y_i$. Similarly to the reformulations in Section~\ref{sec_poly_alg}, we replace their low-cost part where $y_i\in[0,\pi]$ using variables $\underline{z}_i$, and their high-cost part where $y_i\in[\pi,1]$ using variables $\overline{z}_i$, to obtain the following reformulation:
\begin{align*}
\min\ & \sum_{i=1}^n C_i x_i + \sum_{i=1}^n \underline{c}_i \pi \underline{z}_i + \sum_{i=1}^n \overline{c}_i(1-\pi)\overline{z}_i + \Gamma \pi \\
\st\ & \sum_{i\in T_j} (x_i + \pi \underline{z}_i + (1-\pi)\overline{z}_i) = p_j & \forall j\in[m] \\
& x_i + \underline{z}_i \le 1 & \forall i\in[n] \\
& x_i + \overline{z}_i \le 1 & \forall i\in[n] \\
& x \in \{0, 1\}^n, \underline{z}, \overline{z} \in [0, 1]^n, \pi \in [0, 1]
\end{align*}
For each $j\in[m]$, we define the function $f_j \colon [0, 1] \to \Rge$ as follows:
\begin{align*} \tag{$\circ$} \label{mrs-fprob}
f_j(\pi) = \min\ & \sum_{i\in T_j} C_i x_i + \sum_{i\in T_j} \underline{c}_i \pi \underline{z}_i + \sum_{i\in T_j} \overline{c}_i(1-\pi)\overline{z}_i \\
\st\ & \sum_{i\in T_j} (x_i + \pi \underline{z}_i + (1-\pi)\overline{z}_i) = p_j \\
& x_i + \underline{z}_i \le 1 & \forall i\in T_j \\
& x_i + \overline{z}_i \le 1 & \forall i\in T_j \\
& x \in \{0, 1\}^{T_j}, \underline{z}, \overline{z} \in [0, 1]^{T_j}
\end{align*}
Notice that the optimal value of \eqref{2MRS-AC} is given by $\min_{\pi\in[0,1]} \left(\sum_{j = 1}^m f_j(\pi) + \Gamma \pi\right)$.
The following result is a direct adaptation of \cite[Lemma~6.22]{robook}.

\begin{lemma}\label{lem:mrs-piecewise}
For all $j \in [m]$, the function $f_j$ is piecewise linear and has breakpoints within the set
\[ \Pi_j = [0, 1] \cap \left\{\frac{p_j - a - b}{c-b} : a,b\in\{0,\dots,p_j\}, c\in\{0,\dots,\lvert T_j \rvert\}, a+b \le p_j, c > b \right\}. \]
\end{lemma}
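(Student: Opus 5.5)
Fix $j$ and $\pi\in[0,1]$. I will first reduce $f_j(\pi)$ to a finite minimum of functions whose shape in $\pi$ is explicit. For a fixed first-stage vector $x$ with $k:=\sum_{i\in T_j}x_i\le p_j$, the remaining problem in $\underline z,\overline z$ is a continuous min-knapsack on the items $i\in T_j$ with $x_i=0$. Each such item contributes a piece of size $\pi$ and ratio $\underline c_i$, and a piece of size $1-\pi$ and ratio $\overline c_i$; the capacity is $p_j-k$. The ratios do not depend on $\pi$, so Dantzig's greedy solves it with a fixed order of the pieces, exactly as in Section~\ref{sec_poly_alg}.

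Next I describe the greedy solution combinatorially. It fully packs $a'$ low pieces and $b$ high pieces and then takes a fraction of one further piece. Because $\underline c_i\le\overline c_i$, the item of every packed high piece also has its low piece packed. On any interval of $\pi$ on which the triple (number $a'$ of full low pieces, number $b$ of full high pieces, type of the fractional piece) stays fixed, the filled amount is $a'\pi+b(1-\pi)+(\text{fraction})=p_j-k$. The cost is then affine in $\pi$, since each full piece costs an affine function of $\pi$ and the fractional piece contributes (remaining capacity)$\times$(its ratio), which is also affine. The combinatorial structure can only change at a value of $\pi$ where the fractional piece becomes exactly full or exactly empty, that is, where $a'\pi+b(1-\pi)=p_j-k$ for some integers $a',b$.

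Solving this equation gives $\pi=(p_j-k-b)/(a'-b)$. Writing $a:=k$ and $c:=a'$, this is $(p_j-a-b)/(c-b)$. It remains to check the ranges. We have $0\le a\le p_j$ and $c\le|T_j|$. Also $a+b\le p_j$, because the full high pieces, each of size $1-\pi$, together with the first-stage items fit the capacity at $\pi=0$, and since the high pieces are taken after their own low pieces, $b\le c$. The case $c=b$ gives a constant total size and therefore no breakpoint. If $c<b$, I rewrite $1-\pi$ in terms of the symmetric quantity to land in the stated form; I will try to show that only $c>b$ yields a crossing in $[0,1]$. So every breakpoint of the optimal value for fixed $x$ lies in $\Pi_j$.

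Finally, for fixed $\pi$ the optimal $x$ only needs to choose $k$ cheapest first-stage items, but that choice interacts with which items remain. Instead I will note that $f_j(\pi)=\min_x g_x(\pi)$ is a minimum of finitely many piecewise-linear functions, each of which has breakpoints in $\Pi_j$. The main obstacle is that a pointwise minimum can create new breakpoints where two pieces cross. My plan is to avoid this by the standard exchange argument of \cite[Lemma~6.22]{robook}: on each interval between consecutive points of $\Pi_j$, the LP relaxation in $(x,\underline z,\overline z)$ has a constraint matrix that is totally unimodular up to scaling by $\pi$. Hence the mixed-integer problem restricted to that interval equals an LP whose optimal basis is fixed, so $f_j$ is affine there. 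If this fails, I would instead bound the crossing points directly, showing that two candidate affine pieces crossing inside an interval would contradict greedy optimality, since both are optimal greedy solutions of one knapsack.
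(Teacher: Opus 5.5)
\textbf{Gap in the last step.} Your fixed-$x$ analysis is correct. With $k=\sum_{i\in T_j}x_i$, the greedy value $g_x(\pi)$ is affine wherever the greedy configuration is constant. The configuration can only change where a prefix of the fixed order has total size $c\pi+b(1-\pi)=p_j-k$ with $b\le c$, and this gives a point of $\Pi_j$ with $a=k$. Your case $c<b$ is vacuous, and $a+b\le p_j$ follows simply from $p_j-k=c\pi+b(1-\pi)\ge b$.

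The genuine gap is the passage from the $g_x$ to $f_j=\min_x g_x$; neither proposed route works.
\begin{itemize}
\item The ``totally unimodular up to scaling'' argument fails. The matrix is not TU, and $\pi$ sits in the constraint matrix: in the row $\sum_i(x_i+\pi\underline z_i+(1-\pi)\overline z_i)=p_j$, or after unscaling in $\underline y_i\le\pi(1-x_i)$. So even when the relaxation is integral, nothing keeps the optimal basis fixed between consecutive points of $\Pi_j$.
\item The fallback fails as well. Different $x$ give different knapsack instances, with different item sets and capacities, so greedy optimality says nothing about how $g_x$ and $g_{x'}$ compare.
\end{itemize}

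\textbf{The statement as written is false.} No argument can close this step. Take $\lvert T_j\rvert=p_j=1$, $C_1=1$, $\underline c_1=0$, $d_1=2$. Choosing $x_1=1$ costs $1$. Choosing $x_1=0$ forces $\underline z_1=\overline z_1=1$ for $\pi\in(0,1)$, with cost $2(1-\pi)$. Hence $f_j(\pi)=\min\{1,2-2\pi\}$, which has a breakpoint at $\pi=1/2$, whereas $\Pi_j=\{0,1\}$. This is the same phenomenon as the extra breakpoint $b^\star_j$ in Section~\ref{sec_poly_alg}. The paper gives no argument of its own here; it only refers to an adaptation of \cite[Lemma~6.22]{robook}.

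\textbf{What you can prove instead.} Your fixed-$x$ analysis does establish a concavity statement, and that is all Theorem~\ref{th:multialtinP} uses.
\begin{itemize}
\item First, $0,1\in\Pi_j$, via $(a,b,c)=(p_j,0,1)$ and $(0,0,p_j)$.
\item For fixed $x$, $\pi$ enters the unscaled LP only through the bounds $\underline y_i\le\pi$ and $\overline y_i\le 1-\pi$. So $g_x$ is continuous (even convex), and by your analysis it is affine on every closed interval between consecutive points of $\Pi_j$.
\item Hence $f_j$, a minimum of finitely many such functions, is piecewise linear and concave on each of these intervals.
\item Therefore $\sum_j f_j(\pi)+\Gamma\pi$ is concave between consecutive points of $\Pi=\bigcup_j\Pi_j$, and its minimum over $[0,1]$ is attained in $\Pi$.
\end{itemize}
You should prove this weaker statement (equivalently, that some optimal $\pi$ lies in $\Pi$) rather than try to exclude crossings, which genuinely occur.
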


Observe that, if each $f_j$ is piecewise linear with breakpoints in $\Pi_j$, then
$\sum_{j = 1}^m f_j(\pi) + \Gamma \pi$ is also piecewise linear with breakpoints in $\Pi = \bigcup_{j\in[m]} \Pi_j$.
The solution strategy to \eqref{2MRS-AC} is to calculate each value $f_j(\pi)$ for $\pi\in\Pi$, and to choose the best of these solutions. By Lemma~\ref{lem:mrs-piecewise}, we thus ensure that we have found an optimal solution. So let $\pi\in\Pi$ and $j\in[m]$ be fixed.

\begin{lemma}\label{lem:mrs-guess}
  For every $j \in [m]$ and $\pi \in [0, 1]$, there is an optimal solution to the problem~\eqref{mrs-fprob} where at most one of the variables $\bigcup_{i \in T_j} \{\underline{z}_i, \overline{z}_i\}$ is fractional.
\end{lemma}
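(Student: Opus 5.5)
Fix $j \in [m]$ and $\pi \in [0,1]$. The plan is to reduce to a single linear program by fixing the integer part, and then use a vertex argument. Problem~\eqref{mrs-fprob} has finitely many choices of $x \in \{0,1\}^{T_j}$. It is feasible for some $x$; for instance, $x$ with exactly $p_j$ ones and $\underline{z}=\overline{z}=0$ works. So an optimal solution exists, and we take an optimal $x^\star$ and fix it. With $x = x^\star$ fixed, the remaining problem in $(\underline{z}, \overline{z})$ is a linear program. Its constraints are:
\begin{itemize}
\item the single equality $\sum_{i\in T_j} (\pi \underline{z}_i + (1-\pi)\overline{z}_i) = p_j - \sum_{i \in T_j} x^\star_i$;
\item box constraints, since for $i$ with $x^\star_i = 1$ the constraints force $\underline{z}_i = \overline{z}_i = 0$, and otherwise $\underline{z}_i, \overline{z}_i \in [0,1]$.
\end{itemize}
This polytope is nonempty (it contains the $z$-part of the optimal solution) and bounded. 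Hence the linear objective attains its minimum at a vertex. Any vertex optimal $(\underline{z}, \overline{z})$, together with $x^\star$, is optimal for~\eqref{mrs-fprob}.

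The key step is the vertex structure. A vertex of a polytope in dimension $2\lvert T_j\rvert$ needs $2\lvert T_j\rvert$ linearly independent tight constraints. Only one of these can be the equality constraint; all others must be bound constraints, either $z = 0$ or $z = 1$ for some variable, including the fixed-zero ones. So at most one variable is not at a bound, and thus at most one of the variables $\bigcup_{i \in T_j} \{\underline{z}_i, \overline{z}_i\}$ is fractional.

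A cleaner alternative that avoids vertex language is an exchange argument. Suppose two variables $z, z'$ are strictly fractional, with sizes $s, s' > 0$ in the equality constraint and ratios $r, r'$. Shift mass along the direction $(+\varepsilon/s, -\varepsilon/s')$. This keeps feasibility and changes cost by $\varepsilon(r - r')$. Choosing the sign to be non-increasing, we push until one variable hits a bound, and repeat. Variables with size $0$ (when $\pi \in \{0,1\}$) can simply be rounded to $0$, since their cost coefficient is also $0$. This is the greedy structure of Dantzig's algorithm, as in Section~\ref{sec_poly_alg}.

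The only mild obstacle is the degenerate cases $\pi \in \{0,1\}$, where some coefficients $\pi$ or $1-\pi$ vanish, so a variable may appear in neither the constraint nor the objective. These variables are set to $0$ without changing feasibility or cost, and the argument then applies unchanged.
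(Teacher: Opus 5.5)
Your proof is correct and follows the paper's approach. Like the paper, you fix $x$ to its values in an optimal solution and note that the remaining linear program in $(\underline{z}, \overline{z})$, consisting of one equality constraint plus bounds, has an optimal solution with at most one fractional variable. The paper justifies that last fact by invoking Dantzig's greedy algorithm for the continuous knapsack problem, whereas you count tight constraints at a vertex (or use a pairwise exchange); your vertex argument is self-contained and also covers the degenerate cases $\pi \in \{0,1\}$ without special treatment.
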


\begin{proof}
  For every fixed $j \in [m]$, $\pi \in [0, 1]$, and $x \in \{0, 1\}^n$, the problem~\eqref{mrs-fprob} is a linear program that has the structure of a continuous min-knapsack problem, similarly to~\eqref{fjpi''} in Section~\ref{sec_poly_alg} and~\eqref{CKP} in Section~\ref{sec:hardnesscont}.
  Such a problem can be solved by a greedy algorithm~\cite{Dantzig1957}, which gives an optimal solution with at most one fractional variable.
  In particular, this holds when fixing $x$ to the values these variables attain in an optimal solution of~\eqref{mrs-fprob}, which concludes the proof.
\end{proof}

Using the existence of a single fractional item, we can now solve problem~\eqref{mrs-fprob}:

\begin{lemma}\label{lem:mrs-poly}
For every $j \in [m]$ and $\pi \in [0, 1]$, the problem~\eqref{mrs-fprob} can be solved in polynomial time.
\end{lemma}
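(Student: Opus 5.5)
We solve \eqref{mrs-fprob} for fixed $j$ and $\pi\in[0,1]$ by guessing which variable is the single fractional one, as Lemma~\ref{lem:mrs-guess} allows, and then solving the remaining purely combinatorial problem by sorting. If $\pi\in\{0,1\}$, one family of $z$-variables has coefficient $0$ in the constraint. In that case the problem is just a selection of $p_j$ units among items with costs $C_i$ or $\overline{c}_i$ (resp.\ $\underline{c}_i$), and it is solved greedily. So assume $0<\pi<1$.

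By Lemma~\ref{lem:mrs-guess}, some optimal solution has all variables integral except at most one $z$-variable. Moreover, for each $i$ the constraints $x_i+\underline{z}_i\le1$ and $x_i+\overline{z}_i\le1$ force each item $i$ into one of four integral states:
\begin{itemize}
\item $x_i=1$, contributing size $1$ and cost $C_i$;
\item $\underline{z}_i=\overline{z}_i=1$, contributing size $1$ and cost $\underline{c}_i\pi+\overline{c}_i(1-\pi)$;
\item only $\underline{z}_i=1$, contributing size $\pi$ and cost $\underline{c}_i\pi$;
\item only $\overline{z}_i=1$, contributing size $1-\pi$ and cost $\overline{c}_i(1-\pi)$;
\end{itemize}
or item $i$ is unused.

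Let $a$ be the number of items of size $1$ (the first two states), $b$ the number with only $\underline{z}_i=1$, and $c$ the number with only $\overline{z}_i=1$. The fractional variable, if any, is $\underline{z}_k$ or $\overline{z}_k$ for some item $k$; its own partner variable is $0$ or $1$, and $x_k=0$. We enumerate the following guesses:
\begin{itemize}
\item the fractional item $k$ and which of its variables is fractional ($\bigO(n_j)$ choices);
\item the state of $k$'s partner variable (two choices);
\item the counts $a,b,c$ of the other items ($\bigO(p_j\, n_j^2)$ choices, since $a\le p_j$ and $b,c\le n_j$).
\end{itemize}
The capacity equation then determines the fractional value $\bigl(p_j - a - b\pi - c(1-\pi) - (\text{partner contribution})\bigr)$ divided by its size. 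We discard the guess if this value is not in $[0,1]$.

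With all counts fixed, it remains to assign the other items to states at minimum cost. Each item chooses one of five states with fixed costs, under cardinality constraints: exactly $a$ items of size $1$, exactly $b$ items of type $\underline{z}$ only, and exactly $c$ items of type $\overline{z}$ only. Both size-$1$ states count toward $a$, so item $i$ uses cost $\min\{C_i,\ \underline{c}_i\pi+\overline{c}_i(1-\pi)\}$ if it is counted in $a$. This is an assignment / transportation problem: items are assigned to three classes with capacities $a,b,c$ plus an unused class, which is a min-cost flow with integral data. It is solvable in polynomial time with an integral optimum by total unimodularity. Taking the best over all guesses gives an optimal solution. Correctness follows from Lemma~\ref{lem:mrs-guess}: one of the guesses matches the structure of an optimal solution, and every guess yields a feasible solution.

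The main obstacle is making sure the enumeration captures exactly the structure given by Lemma~\ref{lem:mrs-guess}, in particular the coupling of a fractional $z$-variable with its partner and with $x_k=0$. Replacing the naive exponential choice of states by the flow subproblem keeps the running time polynomial, roughly $\bigO(p_j n_j^3)$ flow computations.
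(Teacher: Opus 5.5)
Your argument is correct and has the same skeleton as the paper's proof. Both use Lemma~\ref{lem:mrs-guess} to guess the single fractional variable. Both exploit that the total size of the integral part is determined by how many items contribute $1$, $\pi$, or $1-\pi$, so only $\bigO(\lvert T_j\rvert^3)$ size values can occur.

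The difference is the subroutine for the integral part. The paper runs one dynamic program over the items that builds the cost/size Pareto set of the binary part. It then keeps the points whose size lies in $[p_j-\pi,p_j]$ or $[p_j-(1-\pi),p_j]$ and completes them with the fractional variable. You instead fix the count triple $(a,b,c)$ and solve a transportation (min-cost flow) problem for each triple.

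The DP is simpler and handles all triples in one pass, about $\bigO(n_j^4)$ work per guess. Your version needs $\bigO(p_j n_j^2)$ flow computations per guess. In exchange, you make the coupling $x_i+\underline z_i\le 1$, $x_i+\overline z_i\le 1$ explicit by forcing $x_k=0$ and fixing the partner variable, which the paper's sketch leaves implicit. Your separate treatment of $\pi\in\{0,1\}$ is needed in your version because you divide by the size of the fractional item.

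One small patch is needed. You write ``the fractional variable, if any'', but every guess you enumerate contains an item $k$ with $x_k=0$. Suppose $p_j=\lvert T_j\rvert$ and the optimum sets $x_i=1$ for all $i$. For example, take $\lvert T_j\rvert=p_j=1$, $C_1=0<\underline c_1$, and $0<\pi<1$. Then every enumerated solution costs at least $\underline c_1>0$, so the optimum is missed.

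To fix this, add the guess ``no fractional variable'': run the flow for all triples with $a+b\pi+c(1-\pi)=p_j$. In every other case, an integral optimum has some item with $x_k=0$. That item serves as the guessed item $k$, with fractional value in $\{0,1\}$, which your closed interval $[0,1]$ already accepts.
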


\begin{proof}
  Using Lemma~\ref{lem:mrs-guess}, we first guess one variable $\underline{z}_i$ or $\overline{z}_i$ that is allowed to be fractional.
  Using a dynamic program, we construct the set of Pareto solutions among
  all binary variables (i.e., all remaining $\underline{z}$ and $\overline{z}$ variables and all $x$ variables)
  with respect to minimum costs and minimum size. As all possible item sizes are 1, $\pi$, or $1-\pi$, this set cannot contain more than $\bigO(\lvert T_j \rvert^3)$ elements. Among these solutions, we then consider all solutions with total size in
  $[p_j - \pi, p_j]$ or $[p_j - (1 - \pi), p_j]$ (if the fractional variable is of type $\underline{z}_i$ or of type $\overline{z}_i$, respectively),
  and extend them to a feasible solution using the fractional variable.
  A solution constructed in this way with minimum costs is then an optimal solution to problem~\eqref{mrs-fprob}.
\end{proof}

\begin{theorem}\label{th:multialtinP}
The problem (2MRS-AC)
can be solved in polynomial time.
\end{theorem}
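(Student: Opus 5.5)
The proof assembles the lemmas of this section into an algorithm and checks that its running time is polynomial. We start from the reformulation of \eqref{2MRS-AC} obtained by splitting each $y_i$ into a low-cost part $\pi\underline{z}_i$ and a high-cost part $(1-\pi)\overline{z}_i$. For fixed $\pi$ the problem separates over the groups $T_j$, because the only constraint linking different groups is through $\pi$. Hence the optimal value equals $\min_{\pi\in[0,1]} \bigl(\sum_{j=1}^m f_j(\pi) + \Gamma\pi\bigr)$, as noted after \eqref{mrs-fprob}. Restricting $\pi$ to $[0,1]$ is justified as in the formulation: $\pi+\rho_i \ge y_i$ with $y_i\le 1$, so any $\pi>1$ can be lowered to $1$ without loss.

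By Lemma~\ref{lem:mrs-piecewise}, each $f_j$ is piecewise linear with breakpoints in $\Pi_j$. Therefore $F(\pi) := \sum_j f_j(\pi)+\Gamma\pi$ is piecewise linear on $[0,1]$ with breakpoints in $\Pi=\bigcup_j\Pi_j$, where we also include the endpoints $0$ and $1$ in $\Pi$. A piecewise linear function on a compact interval attains its minimum at a breakpoint or an endpoint, so it suffices to evaluate $F$ on $\Pi$. Counting the triples $(a,b,c)$ gives $|\Pi_j| = \bigO(p_j^2 |T_j|) = \bigO(|T_j|^3)$, so $|\Pi| = \bigO(n^3)$, which is polynomial. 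Each element of $\Pi$ is a rational number whose numerator and denominator are bounded by $n$, so there are no encoding-size issues.

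For each $\pi\in\Pi$ and each $j\in[m]$, Lemma~\ref{lem:mrs-poly} computes $f_j(\pi)$, together with an optimal $x$ restricted to $T_j$, in polynomial time. The algorithm sums these values, adds $\Gamma\pi$, and returns the best $\pi$ together with the concatenated first-stage vectors $x$. Correctness follows from the decomposition together with the breakpoint argument. The total running time is $|\Pi|\cdot m$ times a polynomial, which is polynomial.

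The step that needs the most care is the claim that minimizing over $\Pi$ is enough. Each $f_j$ involves integral variables $x$, so $f_j$ is a minimum over finitely many LP value functions and need not be convex. It is still piecewise linear and continuous, or at least lower semicontinuous on pieces; this is exactly what Lemma~\ref{lem:mrs-piecewise} guarantees through its breakpoint set. If continuity were in doubt, I would argue instead that on every open interval between consecutive points of $\Pi$ the function $F$ is linear, so its infimum over the closure of that interval is attained at an endpoint, and that value is one we evaluate. Finally, the full solution, including the second-stage worst case, is recovered from the chosen $\pi$ and $x$.
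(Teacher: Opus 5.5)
Your proposal is correct and is essentially the paper's proof: you enumerate the polynomially many candidate points $\Pi$ from Lemma~\ref{lem:mrs-piecewise}, evaluate each $f_j$ there via Lemma~\ref{lem:mrs-poly}, and keep the best $\pi$ together with the corresponding $x$. Your extra details are correct: the bound $\lvert\Pi\rvert=\bigO(n^3)$, the encoding size of these points, and the restriction to $\pi\le 1$. Your continuity worry is also harmless, because for fixed $x$ the parameter $\pi$ enters \eqref{2MRS-AC} only as the right-hand side of the constraints $y_i-\rho_i\le\pi$, so each $f_j$ is a minimum of finitely many continuous functions.
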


\begin{proof}
We solve the problem $\min_{\pi\in[0,1]} \left(\sum_{j = 1}^m f_j(\pi) + \Gamma \pi\right)$ by enumerating a polynomial number of breakpoints using Lemma~\ref{lem:mrs-piecewise}. According to Lemma~\ref{lem:mrs-poly}, each value $f_j(\pi)$ can then be calculated in polynomial time, which means that the optimal value is found in polynomial time. Note that we also construct a corresponding feasible solution in the process.
\end{proof}

\section{Representative Selection with Discrete Budgeted Sets}
\label{sec:hardnessdisc}

Finally, we study the complexity of problem (2RS-D), which is the only problem with discrete budgeted uncertainty we consider. Indeed, two-stage problems with discrete budgeted uncertainty are often known to be hard. In this context, the following is an easy adaptation from \cite{GoerigkLendlWulf2022}, which we present for the sake of completeness.

\begin{theorem}
\label{thm:discrete-uncertainty-hard}
The problem (2RS-D) is NP-hard.
\end{theorem}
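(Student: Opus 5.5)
We reduce from a strongly NP-hard problem, in the spirit of \cite{GoerigkLendlWulf2022}; the natural candidate is \textsc{Vertex Cover} (or \textsc{Independent Set}). Under discrete budgeted uncertainty with integer $\Gamma$, the adversary picks a set of at most $\Gamma$ items to attack, and the second stage then reacts. In (2RS-D) this reaction decouples across the groups $T_j$. For a group $j$ in which nothing is bought in the first stage, the adversary must decide how many attacks to spend on it. If the group contains one item $i$ with $\underline c_i=0$ and $d_i$ large, together with a ``backup'' item of moderate cost, then a single attack on item $i$ raises the second-stage cost of that group to the backup cost. The adversarial problem is therefore a budgeted choice of which unprotected groups to hurt. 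The first-stage decision, in turn, is which groups to protect by buying an item up front.

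To encode a graph $G=(V,E)$, we introduce one group per vertex and one group per edge, and use shared costs so that the adversary's attack interacts across groups. The difficulty is that groups are disjoint, so an item lies in exactly one group and coupling can only come through the shared budget $\Gamma$. For this reason I would instead follow the construction of \cite{GoerigkLendlWulf2022} for (2S-D) and replace its single selection constraint ``choose $p$ items'' by representative groups. Each original item becomes a group $\{i, i'\}$, where $i'$ is a dummy item that is free in both stages and has no deviation ($C_{i'}=\underline c_{i'}=d_{i'}=0$). Choosing $i'$ corresponds to ``not selecting $i$''. The catch is that this makes ``not selecting'' costless, so it does not enforce the cardinality $p$.

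To restore a cardinality requirement, I would add, for each group, an offset: a large constant $L$ added to $C_{i'}$, $\underline c_{i'}$ and $\overline c_{i'}$. Then every group contributes $L$ unless the real item $i$ is taken, and the objective rewards selecting items in a controlled way. Alternatively, I would reduce directly from a problem whose hardness proof in \cite{GoerigkLendlWulf2022} used only a per-item structure. The key check is that the adversary's best response is an integer choice of $\Gamma$ groups, each losing a fixed amount equal to the difference between its backup cost and its cheap cost. With this, the objective for first-stage protection set $P$ becomes
\[
\sum_{j\in P} C_j + \sum_{j\notin P} \underline{\text{cost}}_j + \text{(sum of the $\Gamma$ largest losses among } j\notin P).
\]
Making this expression encode a hard problem (for instance \textsc{Partition} or \textsc{Knapsack} via ``top-$\Gamma$ sum'' interactions) is exactly the kind of structure shown hard in \cite{GoerigkLendlWulf2022}.

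The main obstacle is this last step: proving that minimising $\sum_{P} C_j + \sum_{\bar P}\ell_j + \text{top}_\Gamma(\{u_j\}_{j\notin P})$ is NP-hard. I would try a reduction from \textsc{Knapsack} or \textsc{Partition} that ties the set of protected groups to a weight constraint. Items of weight $a_j$ get $u_j$ chosen so that exactly the top $\Gamma$ unprotected groups are forced to be ``large''. The trade-off between buying protection and absorbing top-$\Gamma$ losses then encodes the capacity, with big-$M$ separations similar to those in Lemmas~\ref{lem_hardness_feasible} and~\ref{lem_hardness_infeasible}.
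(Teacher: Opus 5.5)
\paragraph{There is a genuine gap.} Your proposal does not yet contain a reduction: the step you flag as the main obstacle is the entire content of the proof, and it cannot be completed along the route you describe. In your gadget, one attack on an unprotected group raises its second-stage cost by a fixed $u_j\ge 0$. Writing $\ell_j$ for your $\underline{\text{cost}}_j$, the objective is $\sum_{j\in P}C_j+\sum_{j\notin P}\ell_j+\mathrm{top}_\Gamma(\{u_j\}_{j\notin P})$, and this can be minimized in polynomial time. For integer $\Gamma$ one has $\mathrm{top}_\Gamma(S)=\min_{\theta\ge 0}\bigl(\Gamma\theta+\sum_{j\in S}(u_j-\theta)^+\bigr)$. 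Exchanging the two minimizations gives
\[
\min_{\theta\ge 0}\Bigl(\Gamma\theta+\sum_{j}\min\bigl\{C_j,\ \ell_j+(u_j-\theta)^+\bigr\}\Bigr),
\]
which decouples over the groups for fixed $\theta$ and is piecewise linear in $\theta$ with $\bigO(n)$ breakpoints. This is just the Bertsimas--Sim argument. If every attack affects one group additively, the two-stage problem collapses to a single-stage min-max problem with budgeted uncertainty, which is easy. Your $\{i,i'\}$ construction with offset $L$ is in the same class. Each group is still decided independently, the adversary still gains a fixed amount per attacked group, and no cardinality constraint is restored. So neither Knapsack nor \textsc{Partition} can be encoded this way unless $\mathrm{P}=\mathrm{NP}$.

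\paragraph{The missing idea.} You need a bucket whose second-stage cost reacts to attacks in an all-or-nothing way, so that recourse inside a bucket genuinely matters. The paper uses a bucket $T_1$ of $n=\Gamma$ identical items with $C_i=W$ prohibitively large, $\underline{c}_i=M$, and $\overline{c}_i=M+2Q$. In addition, each \textsc{Partition} value $a_i$ gets a singleton bucket with $C=a_i$, $\underline{c}=0$, and $\overline{c}=2a_i$. The second stage picks any unattacked item of $T_1$ at cost $M$, so attacking $T_1$ has an effect only if all $n$ of its items are attacked, which exhausts the budget. Hence the adversary has two choices. It can gain $2Q$ on $T_1$. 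Or it can attack all singletons not bought in the first stage (at most $n=\Gamma$ of them) and gain $2(2Q-X)$, where $X$ is the first-stage cost of the bought singletons. The total cost is $X+M+\max\{2Q,4Q-2X\}=M+2Q+\max\{X,2Q-X\}$. This is at most $M+3Q$ if and only if $X=Q$, that is, if and only if the \textsc{Partition} instance is a yes-instance. This all-or-nothing payoff is exactly what breaks the decoupling that makes your top-$\Gamma$ formulation easy. A strongly NP-hard source problem is also unnecessary; weak NP-hardness via \textsc{Partition} suffices.
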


\begin{proof}
Let an instance of the NP-hard partition problem (see~\cite{karp1972}) be given, consisting of (distinct) values $a_1,\dots,a_n \in \N$.
Let $Q := 1/2 \sum_{i=1}^n a_i$ be half of their total sum. 
The task is to decide if there is a set $I \subseteq [n]$ such that $\sum_{i \in I} a_i = Q$. 
We construct an instance of the two-stage robust representative selection problem with discrete budgeted uncertainty. Let $M > 2Q$ be a large constant, and let $W$ be an even larger constant defined later.
The instance has $2n$ items, $n + 1$ buckets, and $\Gamma=n$. 
The cost of all items is described in Table~\ref{tab:h3}.

\begin{table}
  \begin{center}
    \begin{tabular}{cccccccccc}
      \toprule
      & \multicolumn{3}{c}{$T_1$} & & \multicolumn{1}{c}{$T_2$} & & \multicolumn{1}{c}{$\dots$} & & \multicolumn{1}{c}{$T_{n + 1}$} \\
      \cmidrule{2-4}
      \cmidrule{6-6}
      \cmidrule{8-8}
      \cmidrule{10-10}
      $i$ & 1 & $\dots$ & $n$ & ~ & $n+1$ & ~ & $\dots$ & ~ & $2n$ \\
      \midrule
      $C_i$ & $W$ & $\dots$ & $W$ & & $a_1$ & & $\dots$ & & $a_n$ \\
      $\underline{c}_i$ & $M$ & $\dots$ & $M$ & & 0 & & $\dots$ & & 0 \\
      $\overline{c}_i$ & $M+2Q$ & $\dots$ & $M+2Q$ & & $2a_1$ & & $\dots$ & & $2a_n$ \\
      \bottomrule
    \end{tabular}
  \end{center}
  \caption{Instance constructed in the reduction for the proof of Theorem~\ref{thm:discrete-uncertainty-hard}.\label{tab:h3}}
\end{table}

The bucket $T_1$ is the largest and contains $n$ items, each having 
$C_i=W$, $\underline{c}_i=M$, and $\overline{c}_i = M+2Q$. 
We will choose $W$ as such a large value that packing an item from $T_1$ in the first stage immediately disqualifies a solution from being optimal.
The following analysis shows that $W > M + 4Q$ suffices.
The items in the remaining buckets $T_2,\dots,T_{n+1}$ each correspond to one of the given values $a_i$.
More precisely, for all $i \in \{2,\dots,n+1\}$, the bucket $T_i$ contains a single item
$n + i - 1$ with $C_{n + i - 1} = a_{i - 1}$, $\underline{c}_{n + i - 1} = 0$, and $\overline{c}_{n + i - 1} = 2 a_{i - 1}$.
Note that,
for each of these buckets, the optimal solution must trivially pack its single item. However, there is still the nontrivial choice whether this happens in the first or in the second stage.

Note that an optimal first-stage solution does not select an item in $T_1$.
Therefore, the item from $T_1$ is always selected in the second stage.
The adversary can therefore choose to invest all their budget into $T_1$, causing a cost increase of $2Q$. However, if not all $n$ items of $T_1$ are attacked simultaneously, the attack has no effect.
Hence, the adversary has effectively two valid strategies:
Either the costs of all items in $T_1$ are increased, or the costs of all items in $T_2 \cup \dots \cup T_{n+1}$ are increased. 
In both cases, a second-stage solution will select the remaining items in $T_2 \cup \dots \cup T_{n+1}$ and one item in $T_1$. 
In the first strategy, the cost of the item from $T_1$ is $M+2Q$, and in the second case, it is $M$. 

Let $X$ be the total first-stage cost of the items in $T_2 \cup \dots \cup T_{n+1}$ that are packed by some solution. The total cost of this solution is
\[ X + \max\{ M + 2Q, M + 4Q - 2X \} = M + 2Q + \max\{ X, 2Q - X \}. \]
This is less than or equal to $M+3Q$ if and only if $X=Q$, which can be achieved if and only if the given instance of the partition problem is a yes-instance.
\end{proof}

\begin{remark}
While Theorem~\ref{thm:discrete-uncertainty-hard} implies NP-hardness of (2RS-D),
it is a priori not clear whether the decision variant of (2RS-D) is contained in NP.
However, the containment in NP follows from the following easy argument: 
Note that it suffices to show that the adversarial problem belonging to (2RS-D) can be solved in polynomial time.
Consider a fixed first-stage selection~$x$. For each bucket $T_j$, for $j \in [m]$, we know if there is still an item missing. If this is not the case, the adversary does not increase the cost of any item of this bucket. If an item still needs to be selected in the second stage, it will be an item with minimal cost. This means that the adversary will increase the costs of the items by ordering them from smallest to largest costs~$\underline{c}_i$. For $j \in [m]$, assume that the adversary invests $\Gamma_j \in \fromto{0}{\Gamma}$ of their budget into making items of $T_j$ more expensive.
Let $\alpha_j(\Gamma_j)$ denote the optimal cost that the adversary can cause in bucket~$T_j$ assuming they invest $\Gamma_j$ of their budget. 
The value $\alpha_j(\Gamma_j)$ can therefore be computed in polynomial time.
Given all values $\alpha_j(\Gamma_j)$, the adversary then solves the problem of maximizing $\sum_{j=1}^m \alpha_j(\Gamma_j)$ subject to $\sum_{j=1}^m \Gamma_j = \Gamma$. 
This can be done with a simple dynamic program.
In summary, we conclude that the decision variant of (2RS-D) is NP-complete.
\end{remark}

\section{Conclusions}
\label{sec:conclusions}

Selection problems are fundamental to robust combinatorial optimization due to their simplicity. In many cases, polynomial-time solvability results were first known for problems of this type, and sometimes remain the only such results to this day. Despite having been studied possibly more than any other type of problem in the context of robust combinatorial optimization, our knowledge of the complexity landscape under different types of uncertainty sets has remained fragmental. This paper gives a systematic and thorough understanding of the two-stage problem complexity for the three commonly studied variants of selection problems, in combination with the three commonly studied types of budgeted uncertainty sets. In particular, we settle the long-standing open problem regarding the complexity of two-stage selection with continuous budgeted uncertainty, by showing that this problem is NP-hard. Moreover, we show that the same is true for the two-stage assignment problem with continuous budgeted uncertainty. To the best of our knowledge, these are the first such hardness results for continuous budgeted uncertainty sets, which leads us to expect that similar hardness results for more complex underlying problems can be found in the future.

Our hardness result is based on a reduction from the knapsack problem, so it does not show strong NP-hardness. It remains an open question whether the two-stage selection problem with continuous budgeted uncertainty can be solved in pseudopolynomial time.

There remain special cases to our setting that are yet unexplored and an interesting challenge for further research. If we consider the multi-representative selection problem with the restriction that each $p_j$ is in $\bigO(1)$, we obtain a generalization of the representative selection problem that may be easier to solve than the multi-representative selection problem itself.
We conjecture that, in case of continuous budgeted uncertainty, this problem can be solved in polynomial time by extending the arguments from Section~\ref{sec_poly_alg}.
Furthermore, our hardness results require the budget~$\Gamma$ to be arbitrary, so they do not imply hardness for specific bounds, such as $\Gamma=1$. Finally, it remains open to explore recoverable problem variants in the same way as two-stage problems have been studied in this paper.

\paragraph*{Acknowledgements.} Lasse Wulf was supported by Eva Rotenberg's Carlsberg Foundation Young Researcher Fellowship CF21-0302 ``Graph Algorithms with Geometric Applications''.

\printbibliography

\end{document}